\documentclass{article}
\usepackage{amsmath,amssymb,amsthm}
\usepackage[cp1250]{inputenc}
\usepackage{enumerate}
\usepackage{a4wide}

\usepackage{color} 

\numberwithin{equation}{section}

\def\XXint#1#2#3{{\setbox0=\hbox{$#1{#2#3}{\int}$ }
\vcenter{\hbox{$#2#3$ }}\kern-.6\wd0}}

\date{}

\newcommand{\de}{\partial}
\newcommand{\pat}{\partial_t}

\newcommand{\paxx}{\partial_{x_2}}

\newcommand{\dxdt}{\ {\rm d}x{\rm d}t}

\newcommand{\abs}[1]{\ensuremath{\left| #1 \right|}}
\newcommand{\ep}{\varepsilon}
\newcommand{\epd}{\ep_\Delta}
\newcommand{\dd}{\delta_\Delta}
\newcommand{\id}{\mathbb{I}}
\newcommand{\Sz}{\mathcal{S}^{2\times 2}_0}
\newcommand{\R}{\mathbb{R}} 
\newcommand{\Div}{{\rm div}_x}

\renewcommand{\d}{{\rm d}}

\newcommand{\dr}{\,{\rm d}r}
\newcommand{\dt}{\,{\rm d}t}
\newcommand{\dtau}{\,{\rm d}\tau}

\newcommand{\Sym}{\mathcal{S}}
\newcommand{\N}{\mathbb{N}}

\newtheorem{theorem}{Theorem}[section]
\newtheorem{lemma}[theorem]{Lemma}
\newtheorem{definition}[theorem]{Definition}
\newtheorem{propo}[theorem]{Proposition}
\newtheorem{remark}[theorem]{Remark}

\newtheorem{coro}[theorem]{Corollary}

\title{Non--uniqueness of admissible weak solutions to the compressible Euler equations with smooth initial data}
\author{Elisabetta Chiodaroli$^2$\footnote{elisabetta.chiodaroli@unipi.it} \and Ond\v rej Kreml$^1$\footnote{kreml@math.cas.cz} \and
V\'aclav M\'acha$^1$\footnote{macha@math.cas.cz} 
\and 
Sebastian Schwarzacher$^1$\footnote{schwarz@math.cas.cz}
}

\begin{document}


\maketitle

\bigskip

\centerline{$^1$Institute of Mathematics, Czech Academy of Sciences}

\centerline{\v Zitn\' a 25, 115 67 Praha 1, Czech Republic}
\medskip

\centerline{$^2$Dipartimento di Matematica, Universit\`a di Pisa}
\centerline{Via F. Buonarroti 1/c, 56127 Pisa, Italy}

\bigskip

\textbf{Abstract:} We consider the isentropic Euler equations of gas dynamics in the whole two-dimensional space and we prove the existence of a $C^\infty$ initial datum which admits infinitely many bounded admissible weak solutions. Taking advantage of the relation between smooth solutions to the Euler system and to the Burgers equation we construct a smooth compression wave which collapses into a perturbed Riemann state at some time instant $T > 0$. In order to continue the solution after the formation of the discontinuity, we adjust and apply the theory developed by De Lellis and Sz\'ekelyhidi in \cite{DLSz1}--\cite{DLSz2} and we construct infinitely many solutions. We introduce the notion of an admissible generalized fan subsolution to be able to handle data which are not piecewise constant and we reduce the argument to finding a single generalized subsolution.


\section{Introduction} 

In this paper, we study  the Cauchy problem for the isentropic Euler equations of gas dynamics in two space dimensions, i.e. we look for a density $\rho(t,x)$ and a velocity $v(t,x)$ satisfying 
\begin{align}
\pat \rho + \Div (\rho v) &= 0 \label{eq:EulerCE}\\
\pat (\rho v) + \Div (\rho v \otimes v) + \nabla_x p(\rho) &= 0 \label{eq:EulerME}
\end{align}
in $(0,T) \times \R^2$ together with the initial conditions
\begin{equation}
(\rho,v)(0,\cdot) = (\rho^0, v^0)(\cdot) \qquad \text{ in } \R^2. \label{eq:EulerIC}
\end{equation}
The system \eqref{eq:EulerCE}-\eqref{eq:EulerME} is a paradigmatic example of a hyperbolic system of conservation laws. The mathematical theory of such systems of partial differential equations is still far from being completely understood in more than one space dimension. It is however well known and observed already in the case of the most simple example of a scalar law, the Burgers equation, that singularities appear even in case of smooth initial data and that weak solutions with no additional properties are not unique. The notion of admissibility in the form of entropy inequalities was used for one-dimensional systems and multi-dimensional scalar equations to restore uniqueness of weak solutions at least in proper classes of solutions; for more details about this nowadays classical theory we refer for example to the monograph \cite{Dafermos16}.

This theory in particular motivates us to look for weak solutions (i.e. solutions in the sense of distributions) which satisfy in addition the entropy inequality. In the case of the system \eqref{eq:EulerCE}-\eqref{eq:EulerME} this is rather a form of energy inequality, since the only existing mathematical entropy is the physical total energy of the system. Therefore, we call a weak solution \textit{admissible}, if it satisfies the energy inequality
\begin{equation} \label{eq:EulerEnergyIneq}
\pat \left(\rho e(\rho) + \rho\frac{\abs{v}^2}{2}\right) + \Div \left(\left(\rho e(\rho) + p(\rho) + \rho\frac{\abs{v}^2}{2}\right) v \right) \leq 0,
\end{equation}
in the sense of distributions on $[0,T) \times \R^2$.
In \eqref{eq:EulerEnergyIneq}, $e(\rho)$ denotes the internal energy that is related to the pressure through the relation
\begin{equation}
p(r) = r^2 e'(r).
\end{equation}

In their groundbreaking work \cite{DLSz1}, \cite{DLSz2} on the existence of infinitely many solutions to the incompressible Euler equations in multiple space dimensions, De Lellis and Sz\'ekelyhidi also noted that this theory can be applied to the compressible isentropic system as well and proved in \cite{DLSz2} the existence of initial data $(\rho^0,v^0) \in L^\infty(\R^2)$ for which there exists infinitely many admissible weak solutions to \eqref{eq:EulerCE}-\eqref{eq:EulerIC}. This result was further improved by the first author in \cite{Ch} and by Feireisl in \cite{Fe}, proving the existence of initial data $\rho^0 \in C^1(\R^2), v^0 \in L^\infty(\R^2)$ for which there exists infinitely many admissible weak solutions, locally in time and globally in time respectively. We call such data allowing for infinitely many admissible weak solutions \textit{wild data}.

Quite surprisingly in \cite{ChiDelKre15}, the first two authors together with De Lellis proved that even in the class of Lipschitz initial data there exists an example of wild data. The proof in \cite{ChiDelKre15} relied substantially on the convex integration method developed by De Lellis and Sz\'ekelyhidi \cite{DLSz1}, \cite{DLSz2} for the incompressible Euler equations combined with a clever analysis of the Riemann problem for \eqref{eq:EulerCE}-\eqref{eq:EulerME}. A building block in the construction carried out in \cite{ChiDelKre15} are piecewise constant subsolutions to \eqref{eq:EulerCE}-\eqref{eq:EulerME} which were also inspired by the work \cite{sz} of Sz\'ekelyhidi on irregular solutions of the incompressible Euler equations with vortex-sheet data. The wild initial data in \cite{ChiDelKre15} are chosen so to generate a compression wave collapsing to a shock giving then rise to a particular Riemann datum at a certain time instant $t = T$ starting from which infinitely many admissible solutions forward in time can be constructed. Such a solution $(\rho,v)$ is indeed Lipschitz on the time interval $(0,T)$ but not smoother, since the first derivatives are not continuous on the boundaries of the wave.

A natural question therefore arises, whether a similar property can be achieved also for even more regular initial data. In this work, we provide the positive answer. Even so the answer itself is hardly surprising the technical machinery that was necessary to be built in order to produce these counterexamples is rather involving; indeed, we provide technical tools in the proof which may well be of further use.

Our main theorem reads as follows.
\begin{theorem}\label{t:main}
Let $p(\rho) = \rho^2$ and let $T > 0$. There are initial data $(\rho^0,v^0) \in C^\infty(\R^2)$ and $\delta_0 > 0$, such that there exist infinitely many bounded admissible weak solutions to the Euler system \eqref{eq:EulerCE}-\eqref{eq:EulerIC} on the time interval $(0,T+\delta_0)$. These solutions all coincide with the unique $C^\infty$ solution to the Euler system \eqref{eq:EulerCE}-\eqref{eq:EulerIC} on the time interval $(0,T)$.
\end{theorem}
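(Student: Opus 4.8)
The plan is to build the solution in two regimes joined at time $t = T$: a smooth compression-wave regime on $(0,T)$ and a convex-integration regime on $(T, T+\delta_0)$. For the first regime, I would exploit the classical reduction of smooth planar solutions of \eqref{eq:EulerCE}--\eqref{eq:EulerME} that are constant in one spatial direction, say $x_2$, to a one-dimensional problem, and then to the inviscid Burgers equation via the choice $p(\rho) = \rho^2$, which makes the sound speed proportional to $\rho$ and lets one write the $1$-Riemann invariant as a transported quantity. Concretely, I would pick a $C^\infty$ profile for the relevant Riemann invariant whose characteristics are chosen to focus at exactly one point at exactly time $T$ (a single generic caustic of the Burgers flow), with all other Riemann invariants constant, so the corresponding $(\rho^0, v^0)$ are $C^\infty$, the solution stays $C^\infty$ on $(0,T)$, and as $t \uparrow T$ the solution converges to a state that, after the focusing, looks like a \emph{perturbed Riemann datum}: two states separated by a discontinuity in $x_1$, but with the profile no longer piecewise constant because the tails of the compression wave survive.

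For the second regime, the issue is that the trace $(\rho, v)(T, \cdot)$ is \emph{not} piecewise constant, so the De Lellis--Sz\'ekelyhidi machinery for the Riemann problem, and the fan-subsolution constructions of \cite{ChiDelKre15}, do not apply verbatim. This is where I expect the main obstacle to lie, and it is exactly the point the abstract flags: I would introduce the notion of an \emph{admissible generalized fan subsolution} --- a subsolution adapted to data that are only a small $C^\infty$ perturbation of a Riemann state, consisting of an outer region where $(\rho,v)$ solves the Euler system classically (matching the surviving compression-wave tails), a middle ``turbulent'' region bounded by two space-time curves emanating from the focusing point, inside which one prescribes a density, a momentum, and a matrix field in $\mathcal{S}^{2\times2}_0$ satisfying the linearized/relaxed system together with the subsolution (strict) energy inequality. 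I would then prove, by the standard Tartar-framework argument (a suitable $\Lambda$-convex hull computation plus the Baire-category / convex-integration lemma of \cite{DLSz1}--\cite{DLSz2}), that the existence of a \emph{single} such generalized subsolution yields infinitely many bounded admissible weak solutions of \eqref{eq:EulerCE}--\eqref{eq:EulerME} on $(T, T+\delta_0)$ that coincide with it outside the turbulent region; this is the ``reduce the argument to finding a single generalized subsolution'' step.

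It then remains to construct one such generalized subsolution for the perturbed Riemann datum produced by the first regime. I would treat the perturbation perturbatively around the explicit piecewise-constant fan subsolution of \cite{ChiDelKre15}: choose the constant part of the Riemann data so that the unperturbed problem admits a fan subsolution with \emph{strict} inequalities in all the algebraic constraints (strict subsolution energy inequality, strict membership in the interior of the $\Lambda$-convex hull, admissibility of the fan interfaces), and then show these open conditions are stable under a sufficiently small $C^\infty$ perturbation of the data, for a sufficiently short time $\delta_0 > 0$. The surviving compression tails are $C^\infty$ and small, so they can be propagated classically by local well-posedness for \eqref{eq:EulerCE}--\eqref{eq:EulerME} on $(T, T+\delta_0)$, providing the outer classical region; matching this outer region to the turbulent core along the two fan curves, and checking that the energy inequality \eqref{eq:EulerEnergyIneq} holds globally across all interfaces, completes the construction. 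Finally, since all constructed solutions agree with the smooth solution on $(0,T)$ and the smooth solution is unique there by classical hyperbolic theory, the last sentence of the theorem follows. The delicate points I anticipate are: (i) arranging the Burgers focusing so that the limit at $t = T$ is \emph{exactly} of the perturbed-Riemann form with controllably small perturbation, and (ii) verifying that the open (strict-inequality) conditions defining an admissible generalized fan subsolution for the unperturbed Riemann state can indeed be met simultaneously, which is essentially the algebraic computation inherited from \cite{ChiDelKre15} that must be re-examined in the non-piecewise-constant setting.
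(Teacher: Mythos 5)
Your high-level architecture does match the paper's (smooth compression wave collapsing at $t=T$ into a perturbed Riemann state, a notion of admissible generalized fan subsolution, a Baire-category convex-integration lemma reducing everything to one subsolution, and a construction of that subsolution anchored at the explicit piecewise-constant one of \cite{ChiDelKre15}), but the proposal has genuine gaps at the two places where the real work lies. First, building the subsolution for the non-constant datum is not a matter of ``open conditions being stable under a small perturbation.'' An admissible generalized fan subsolution must satisfy exact equality constraints: the Rankine--Hugoniot conditions \eqref{eq:cont_left3}--\eqref{eq:mom_2_right3} along two \emph{unknown} interfaces whose traces are genuinely time-dependent, the interior equations \eqref{eq:SSCE2}--\eqref{eq:SSME2}, and, crucially, the constraint \eqref{eq:Kdef} that $e(\rho_1)+p(\rho_1)/\rho_1+C_1/(2\rho_1)$ depend on $t$ only, which is exactly what makes the convex-integration perturbation energy-neutral in \eqref{eq:AAA3}. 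These are closed conditions and cannot be obtained by perturbing strict inequalities; the paper satisfies them via a specific ansatz ($\rho_1$ and $K$ constant, $\beta=\beta(t)$, $\ep_2$ linear in $x_2$), which reduces everything to the ODE \eqref{eq:SSME6}. That ODE is singular at the collapse time ($l(t)\sim t$, forcing of size $1/(t|\log t|)$) and requires the fixed-point analysis of Lemmas \ref{l:ODE1}--\ref{l:ODE2}; only the strict inequalities \eqref{eq:sub_trace3}--\eqref{eq:sub_det3} and \eqref{eq:E_left3}--\eqref{eq:E_right3} are disposed of by the continuity argument you describe.

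Second, the junction of the two regimes is more delicate than your sketch. A ``single generic caustic'' of a smooth Burgers datum yields a continuous cube-root-type profile at the focusing time, not a jump; to obtain a perturbed Riemann state one needs an entire interval of characteristics (the linear ramp in \eqref{eq:BurICsmoothCW}) to focus at one point, i.e. the smoothed compression wave. More importantly, your delicate point (i) is the crux, not a check: the tails at $t=T$ necessarily have infinite one-sided derivatives at the jump, and one must identify which such profiles are actually attainable from $C^\infty$ data --- this is the content of Lemmas \ref{l:f0}--\ref{l:f0h} with the iterated-logarithm profile $f_0$ --- while simultaneously these profiles determine, through the interface traces $(\rho_\pm^\nu,m_{\pm 2}^\nu)$, whether the singular ODE above is solvable at all. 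The paper resolves this coupling by constructing in reverse: prescribe the traces along the interfaces via $f_0$ in \eqref{eq:lambda1m}--\eqref{eq:lambda1p}, solve for the subsolution, then map back along characteristics and verify in Lemma \ref{l:last} that the resulting time-$T$ datum satisfies the hypotheses of Lemma \ref{l:f0h}; your forward plan contains no mechanism for this compatibility. Finally, the outer regions cannot be continued past $T$ by generic local well-posedness, since the time-$T$ datum is not $C^1$ up to the jump; the paper instead exploits the simple-wave structure and the method of characteristics together with Lemma \ref{l:41}.
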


Even if $C^\infty$ data are shown to allow for infinitely many bounded admissible solutions, such solutions still display a high level of irregularity, i.e. they are only bounded and highly oscillatory. An open question for the compressible Euler equations concerns the level of regularity of solutions displaying such a non--standard behavior as in Theorem \ref{t:main}. We note that the related question concerning regularity of solutions needed for the energy conservation was recently studied in \cite{FGGW}.

This issue has been now extensively tackled for the incompressible Euler equations leading to the proof of the famous Onsager conjecture. For details see
\cite{Eyink}, \cite{CET} for one side of the conjecture and \cite{DLSz3}, \cite{Bu}, \cite{BDIS}, \cite{BDS}, \cite{BDSV}, \cite{DS}, \cite{Is} and \cite{Is2} for the other side. 

The paper is structured as follows. In Section \ref{s:cw} we introduce the notion of smooth compression wave, which is one of the building blocks in our proof. In Section \ref{s:ss} we define subsolutions in such a way that the proof of Theorem \ref{t:main} is reduced to finding a single subsolution so to apply the theory of De Lellis and Sz\'ekelyhidi. In our case we generalize the definition of subsolution given in \cite{ChiDelKre15} since the assumption that the subsolutions consist of piecewise constant functions is too restrictive for our construction. The main ingredient to the proof of the Proposition \ref{p:main} which states the relation between the existence of infinitely many admissible solutions and the existence of a single properly defined subsolution is Lemma \ref{l:CI} which is a straightforward generalization of \cite[Lemma 3.7]{ChiDelKre15}. The main technical part of the proof of Theorem \ref{t:main} is contained in Section \ref{s:main}.

\begin{remark}
In this paper, it is useful to work both with the velocity $v$ and the momentum $m = \rho v$. We emphasize here that we never run into troubles arising from possible presence of vacuum regions using either of these notions, since we always work with strictly positive densities. 
\end{remark}

\section{Smooth compression wave}\label{s:cw}

In this section we recall some standard facts about the one-dimensional rarefaction waves solving the Riemann problem for the Euler system and introduce the smooth compression wave. Since we want to keep this section related to the two-dimensional case, we denote $v = (v_1,v_2)$, $m = (m_1,m_2)$ and we work here with the system for unknowns $(\rho,v_2)$ in the space-time domain $(t, x_2) \in \R^+\times \R$, i.e.
\begin{align}
\pat \rho + \paxx (\rho v_2) &= 0 \label{eq:1DEE1}\\
\pat (\rho v_2) + \paxx (\rho v_2^2 + p(\rho)) &= 0\label{eq:1DEE2} \\
(\rho,v_2)(0,\cdot) &= (\rho^0, v_2^0).\label{eq:1DEEIC}
\end{align}
It is well known that the characteristic speeds of this system are
\begin{equation}\label{eq:CharSpeed}
\lambda_1 = v_2 - \sqrt{p'(\rho)}, \qquad \lambda_2 = v_2 + \sqrt{p'(\rho)}
\end{equation}
and the functions 
\begin{equation}\label{eq:RiemInv}
w_1 = v_2 + \int_0^\rho \frac{\sqrt{p'(r)}}{r}\dr, \qquad w_2 = v_2 - \int_0^\rho \frac{\sqrt{p'(r)}}{r}\dr
\end{equation}
are, respectively, $1$- and $2$-Riemann invariants. The classical theory, see for example \cite[Theorem 7.6.6]{Dafermos16}, yields that every $i$-Riemann invariant is constant along any $i$-rarefaction wave. It is then not difficult to observe that for example in the case of a $1$-rarefaction wave, under the condition $w_1 \equiv const$, the system \eqref{eq:1DEE1}-\eqref{eq:1DEEIC} reduces to a simple Burgers equation for $\lambda_1$
\begin{align}
\pat \lambda_1 + \lambda_1\paxx\lambda_1 &= 0 \label{eq:Bur} \\
\lambda_1(0,\cdot) &= \lambda_1^0. \label{eq:BurIC}
\end{align}
This known property has been used also in \cite{ChiGo17} to set up a first numerical simulation of non--standard solutions to isentropic Euler. 
As it is well known, the solution to the Burgers equation can be obtained by the method of characteristics as long as the characteristic curves do not intersect each other. The characteristic curves are defined as
\begin{equation}
x = \lambda_1^0(r)t + r,
\end{equation}
for $r \in \R$, and the solution $\lambda_1$ to the Burgers equation \eqref{eq:Bur} is constant along these curves, taking values $\lambda_1^0(r)$. The classical theory also yields, that for the Riemann problem
\begin{equation}
\lambda_1^0(x_2) = \left\{\begin{split}
&\lambda_- \qquad \text{ for } x_2 < 0 \\
&\lambda_+ \qquad \text{ for } x_2 > 0,
\end{split} \right.
\end{equation}
with $\lambda_- < \lambda_+$, the problem \eqref{eq:Bur}-\eqref{eq:BurIC} admits a Lipschitz solution in the form of a rarefaction wave
\begin{equation}
\lambda_1(t,x_2) = \left\{\begin{split}
&\lambda_- \qquad \text{ for } x_2 < \lambda_- t \\
& \frac{x_2}{t} \qquad \text{ for } \lambda_- t < x_2 <  \lambda_+ t \\
&\lambda_+ \qquad \text{ for } x_2 > \lambda_+ t.
\end{split} \right.
\end{equation}
Reversing time and space we obtain a solution to the Burgers equation called the compression wave. Namely for initial data 
\begin{equation}
\lambda_1^0(x_2) = \left\{\begin{split}
&\lambda_- \qquad \text{ for } x_2 < -\lambda_- T \\
& -\frac{x_2}{T} \qquad \text{ for } -\lambda_- T < x_2 <  -\lambda_+ T \\
&\lambda_+ \qquad \text{ for } x_2 > -\lambda_+ T
\end{split} \right.
\end{equation}
for some $T > 0$ and $\lambda_- > \lambda_+$, the solution to \eqref{eq:Bur}-\eqref{eq:BurIC} on the time interval $(0,T)$ is Lipschitz and has the form
\begin{equation}
\lambda_1(t,x_2) = \left\{\begin{split}
&\lambda_- \qquad \text{ for } x_2 < -\lambda_- (T-t) \\
& -\frac{x_2}{T-t} \qquad \text{ for } -\lambda_- (T-t) < x_2 <  -\lambda_+ (T-t) \\
&\lambda_+ \qquad \text{ for } x_2 > -\lambda_+ (T-t),
\end{split} \right.
\end{equation}
in time $t = T$ the solution has the form
\begin{equation}
\lambda_1(T,x_2) = \left\{\begin{split}
&\lambda_- \qquad \text{ for } x_2 < 0 \\
&\lambda_+ \qquad \text{ for } x_2 > 0,
\end{split} \right.
\end{equation}
and for $t > T$ the solution consists of a shock.

We introduce the smooth compression wave as follows. Consider initial data in the form 
\begin{equation}\label{eq:BurICsmoothCW}
\lambda_1^0(x_2) = \left\{\begin{split}
&\lambda_- \qquad \text{ for } x_2 < -\lambda_- T - \zeta_1 \\
&f_-^0(x_2) \qquad \text{ for } -\lambda_- T - \zeta_1 < x_2 < -\lambda_- T + \zeta_2 \\
& -\frac{x_2}{T} \qquad \text{ for } -\lambda_- T + \zeta_2 < x_2 <  -\lambda_+ T - \zeta_2 \\
&f_+^0(x_2) \qquad \text{ for } -\lambda_+ T - \zeta_2 < x_2 < -\lambda_+ T + \zeta_1 \\
&\lambda_+ \qquad \text{ for } x_2 > -\lambda_+ T + \zeta_1,
\end{split} \right.
\end{equation}
where $\zeta_1,\zeta_2 > 0$ are sufficiently small (we always assume $\zeta_{1,2} < 1$) and $f_\pm^0$ are smooth strictly monotone functions with strictly monotone first derivatives such that $\lambda_1^0 \in C^\infty(\R)$. The solution to the Burgers equation \eqref{eq:Bur}-\eqref{eq:BurIC} then has the following form on the time interval $(0,T)$
\begin{equation}\label{eq:BursmoothCW}
\lambda_1(t,x_2) = \left\{\begin{split}
&\lambda_- \qquad \text{ for } x_2 < -\lambda_- (T-t) - \zeta_1 \\
&f_-(t,x_2) \qquad \text{ for } -\lambda_- (T-t) - \zeta_1 < x_2 < -\lambda_- (T-t) + \zeta_2\frac{T-t}{T} \\
& -\frac{x_2}{T-t} \qquad \text{ for } -\lambda_- (T-t) + \zeta_2\frac{T-t}{T} < x_2 <  -\lambda_+ (T-t) - \zeta_2\frac{T-t}{T} \\
&f_+(t,x_2) \qquad \text{ for } -\lambda_+ (T-t) - \zeta_2\frac{T-t}{T} < x_2 < -\lambda_+ (T-t) + \zeta_1 \\
&\lambda_+ \qquad \text{ for } x_2 > -\lambda_+ (T-t) + \zeta_1
\end{split} \right.
\end{equation}
and in particular at time $t=T$ a discontinuity appears
\begin{equation}\label{eq:BursmoothCWT}
\lambda_1(T,x_2) = \left\{\begin{split}
&\lambda_- \qquad \text{ for } x_2 < - \zeta_1 \\
&f_-(T,x_2) \qquad \text{ for } - \zeta_1 < x_2 < 0 \\
&f_+(T,x_2) \qquad \text{ for } 0 < x_2 <  \zeta_1 \\
&\lambda_+ \qquad \text{ for } x_2 >  \zeta_1.
\end{split} \right.
\end{equation}
Here again the functions $f_\pm(t,\cdot)$ are smooth and strictly monotone with strictly monotone first derivatives for all $t \in (0,T]$ and $\lambda_1 \in C^\infty((0,T)\times \R)$. It is also easy to observe that the left derivative of $f_-(T,\cdot)$ and the right derivative of $f_+(T,\cdot)$ at point $x_2 = 0$ are equal to $-\infty$.

However it is not immediately clear what the functions $f_\pm(T,x_2)$ need to satisfy in order to ensure that they emerge from $C^\infty$ initial data $\lambda_1^0$ as in \eqref{eq:BurICsmoothCW}. The question is obviously the behaviour of $f_\pm(T,x_2)$ near the origin, where the discontinuity appears. We provide an example of such functions in the following lemma.

\begin{lemma}\label{l:f0}
Let
\begin{equation}\label{eq:f0def}
    f_0(x_2) = 1- \frac{2}{\pi}\arctan(\log|\log(x_2)|).
\end{equation}
Let 
\begin{align}
 f_-(T,x_2) &= a_- f_0(-x_2) + b_-   \\
 f_+(T,x_2) &= -a_+f_0(x_2) + b_+
\end{align}
for $0 < |x_2| < \overline{\zeta}$ with $\overline{\zeta} < \frac{\zeta_1}{2}$ and with some $a_\pm > 0$ and $b_\pm = \lambda_\pm \pm \frac{\zeta_2}{T}$. For $|x_2| \geq \overline{\zeta}$ define $f_\pm(T,x_2)$ in such a way that $\lambda_1(T,\cdot)$ defined as in \eqref{eq:BursmoothCWT} is smooth and strictly monotone with strictly monotone first derivatives on intervals $x_2 \in (-\infty,0)$ and $x_2 \in (0,\infty)$.
Then there exists initial data $\lambda_1^0 \in C^\infty(\R)$ as in \eqref{eq:BurICsmoothCW} such that the solution $\lambda_1(t,x_2)$ to the Burgers equation \eqref{eq:Bur}-\eqref{eq:BurIC} satisfies $\lambda_1 \in C^\infty((0,T)\times \R)$ and $\lambda_1(T,x_2)$ has the form \eqref{eq:BursmoothCWT}.
\end{lemma}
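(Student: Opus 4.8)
The plan is to build the datum $\lambda_1^0$ by running the method of characteristics \emph{backwards} from the profile \eqref{eq:BursmoothCWT} prescribed at time $T$, and then to verify that the only genuinely delicate step — $C^\infty$-regularity of $\lambda_1^0$ at the two points where the transition pieces $f_\pm^0$ are glued to the affine piece $-r/T$ — is precisely what the iterated logarithm in \eqref{eq:f0def} is designed to handle.

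First I would recall that along the characteristic issued from $(0,r)$ the solution of \eqref{eq:Bur}--\eqref{eq:BurIC} has the constant value $\lambda_1^0(r)$ and sits at $x_2=\lambda_1^0(r)T+r$ at time $T$. Hence, to obtain \eqref{eq:BursmoothCWT}, on the two transition zones the datum must be
\[ f_\pm^0(r):=\frac{1}{T}\big(X_\pm^{-1}(r)-r\big),\qquad X_\pm(x_2):=x_2-T\,\lambda_1(T,x_2), \]
where $X_\pm$ is the ``time-$T$ footprint'' of the left/right shoulder $f_\pm(T,\cdot)$ of \eqref{eq:BursmoothCWT} (first extended, as the statement allows, to a smooth strictly monotone function with strictly monotone derivative on $\{|x_2|\ge\overline\zeta\}$, which is possible for a suitable choice of $a_\pm$), and $X_\pm^{-1}$ is its inverse. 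On the remaining zones $\lambda_1^0$ is forced: it equals $\lambda_\mp$ on the two outer constant pieces and $-r/T$ on the collapsing piece (every characteristic of which passes through $x_2=0$ at $t=T$). With $\lambda_1^0$ so defined I would check the routine facts: that $\lambda_1^0$ has precisely the five-piece structure of \eqref{eq:BurICsmoothCW} (smoothness at the two outer junctions is immediate, since there $f_\pm(T,\cdot)$ is glued smoothly to a constant so $X_\pm'$ is finite and positive); that $X_\pm'=1-T\partial_{x_2}f_\pm(T,\cdot)>1$ together with monotonicity of $\partial_{x_2}f_\pm(T,\cdot)$ makes $f_\pm^0$ strictly monotone with strictly monotone derivative and gives $(\lambda_1^0)'\ge-\frac{1}{T}$ with equality only on the closed collapsing piece; hence $1+t(\lambda_1^0)'(r)>0$ for every $t<T$, so by the implicit function theorem the characteristic map $r\mapsto\lambda_1^0(r)t+r$ is a $C^\infty$ diffeomorphism of $\R$ for each such $t$, whence $\lambda_1\in C^\infty((0,T)\times\R)$; and finally that transporting $\lambda_1^0$ to time $T$ reproduces \eqref{eq:BursmoothCWT} (on $(-\zeta_1,0)\cup(0,\zeta_1)$ because $\lambda_1(T,X_\pm^{-1}(r))=f_\pm(T,X_\pm^{-1}(r))$ by the very definition of $f_\pm^0$, and on the other pieces trivially).

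The heart of the matter — and the only place where the explicit formula \eqref{eq:f0def} enters — is $C^\infty$-smoothness of $\lambda_1^0$ across the two junction points $r_0^{(-)}=-\lambda_-T+\zeta_2$ and $r_0^{(+)}=-\lambda_+T-\zeta_2$. Since $-r/T$ has all derivatives of order $\ge2$ vanishing, this reduces to showing that $X_\pm^{-1}$ extends to a $C^\infty$ function up to $r_0^{(\pm)}$ with $X_\pm^{-1}(r_0^{(\pm)})=0$ and every derivative of order $\ge1$ vanishing there, i.e. that $X_\pm^{-1}$ is flat at $r_0^{(\pm)}$. Near these points one computes $X_\pm(x_2)-r_0^{(\pm)}=\pm x_2\pm Ta_\pm f_0(\pm x_2)$, and since $f_0(s)=1-\frac{2}{\pi}\arctan(\log|\log s|)\sim\frac{2}{\pi\log|\log s|}$ as $s\to0^+$, this tends to $0$ more slowly than any power of $x_2$ — equivalently, near $0^+$ the function $f_0$ is the inverse of $w\mapsto\exp\!\big(-\exp(\tan(\tfrac{\pi}{2}(1-w)))\big)$, which is $C^\infty$ and flat at $w=0$. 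To promote this to flatness of \emph{all} derivatives of $X_\pm^{-1}$ I would use the inverse-function (Fa\`a di Bruno) formula: $(X_\pm^{-1})^{(k)}(r)$ is a universal polynomial in $X_\pm^{(j)}(x_2)$, $1\le j\le k$, divided by $\big(X_\pm'(x_2)\big)^{2k-1}$, evaluated at $x_2=X_\pm^{-1}(r)$; differentiating $f_0$ shows each $f_0^{(j)}(s)$ blows up only like $s^{-j}$ up to iterated-logarithm factors, while $X_\pm'(x_2)\sim Ta_\pm f_0'(x_2)\sim\frac{2Ta_\pm}{\pi\,s|\log s|(\log|\log s|)^2}$, so every such ratio carries a net positive power of $|x_2|$ and tends to $0$ as $r\to r_0^{(\pm)}$. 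Hence $X_\pm^{-1}$, and with it $\lambda_1^0$, is $C^\infty$, which completes the construction.

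I expect this flatness verification to be the main obstacle: one has to organize the (elementary but unpleasant) bookkeeping of the derivatives of the iterated logarithm and of the inverse map so as to exhibit, uniformly in the differentiation order $k$, the cancelling power of $x_2$. By contrast, the backward characteristic construction, the non-crossing estimate $1+t(\lambda_1^0)'>0$ for $t<T$, the smooth completion of $f_\pm(T,\cdot)$ on $\{|x_2|\ge\overline\zeta\}$, and the check that the transported datum is \eqref{eq:BursmoothCWT} are all standard.
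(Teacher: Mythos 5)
Your proposal is correct and is essentially the paper's own argument viewed backwards in time: your footprint map $X_\pm$ is (up to an affine shift) exactly the paper's characteristic map $r\mapsto r+tf_0(r)$, and your Fa\`a di Bruno bookkeeping for the flatness of $X_\pm^{-1}$ — each term a product of derivatives of $f_0$ whose total weight is one less than the power of $X_\pm'\sim Ta_\pm f_0'$ in the denominator, leaving a net positive power of $x_2$ against iterated-logarithm factors — is the same computation the paper performs to show that all higher derivatives of the transported profile vanish at the edge so that it glues smoothly to the linear ramp. So this is the same approach, supplemented by the (correct) explicit checks of monotonicity of $X_\pm$ and non-crossing of characteristics for $t<T$ that the paper leaves implicit.
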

\begin{proof}
It is enough to show that taking $f_0(x_2)$ as initial data for the Burgers equation
\begin{align}
    \pat f + f \paxx f &= 0 \label{eq:BurLemma21}\\
    f(0,x_2) &= f_0(x_2) \label{eq:BurLemma21IC}
\end{align}
on the right neighborhood of zero will yield that for any $t > 0$ the solution to the Burgers equation $f(t,x_2)$ will have the properties
\begin{align}
    \lim_{x_2 \rightarrow 0_+} \frac{\de f(t,x_2)}{\de x_2} &= \frac{1}{t} \label{eq:prvnider}\\
    \lim_{x_2 \rightarrow 0_+} \frac{\de^{(n)} f(t,x_2)}{\de x_2^n} &= 0 \label{eq:ntader}
\end{align}
for all $n \geq 2$ and therefore it can be smoothly connected to a linear function $\frac{x_2}{t}$.

Since $f_0(x_2)$ is an increasing function, the solution $f(t,x_2)$ to the Burgers equation in the right neighborhood of zero is obtained by the method of characteristics. The characteristic lines are 
\begin{equation}\label{eq:charlines}
    x_2 = f_0(r)t + r
\end{equation}
for $r \geq 0$. We introduce the inverse function to \eqref{eq:charlines} $r = g(t,x_2)$, i.e. it holds
\begin{equation}\label{eq:charinverse}
    x_2 = g(t,x_2) + t f_0(g(t,x_2)) 
\end{equation}
for $t > 0$ and $x_2$ in the right neighborhood of zero. In particular it is easy to observe that $g(t,0) = 0$ for any $t > 0$, $g(t,x_2)$ is smooth and $g(t,x_2) \neq 0$ for $x_2 > 0$. From \eqref{eq:charinverse} we express
\begin{equation}\label{eq:paxg}
    \paxx g(t,x_2) = \frac{1}{1+tf_0'(g(t,x_2))}.
\end{equation}
The solution to the Burgers equation is given by 
\begin{equation}
    f(t,x_2) = f_0(g(t,x_2))
\end{equation}
and therefore using \eqref{eq:paxg}
\begin{equation}\label{eq:paxftx}
    \paxx f(t,x_2) = \frac{f_0'(g(t,x_2))}{1+tf_0'(g(t,x_2))}.
\end{equation}
On the other hand 
\begin{equation}\label{eq:f0der}
    f_0'(y) = -\frac{2}{\pi}\frac{1}{1+(\log|\log(y)|)^2}\frac{1}{y\log(y)},
\end{equation}
therefore obviously $\lim_{y\rightarrow 0_+} f_0'(y) = \infty$ and \eqref{eq:paxftx} yields \eqref{eq:prvnider}.

We continue by calculating the second derivative of $f(t,x_2)$ and we obtain
\begin{equation}\label{eq:pax2ftx}
    \paxx^2 f(t,x_2) = \frac{f_0''(g(t,x_2))}{(1+tf_0'(g(t,x_2)))^3}.
\end{equation}
while an easy observation yields 
\begin{equation}\label{eq:f0der2}
    f_0''(y) = \frac{1}{y^2}R_2(\log(y),\log|\log(y)|)
\end{equation}
for some rational function $R_2$. Plugging in \eqref{eq:f0der2} and \eqref{eq:f0der} into \eqref{eq:pax2ftx} we observe that 
\begin{equation}
    \paxx^2 f(t,x_2) \sim g(t,x_2)R'_2(\log(g(t,x_2)),\log|\log(g(t,x_2))|)
\end{equation}
for some rational function $R'_2$ and therefore $\paxx^2 f(t,x_2) \rightarrow 0$ as $x_2 \rightarrow 0_+$.

In order to handle higher derivatives we first observe that 
\begin{equation}\label{eq:f0dern}
    f_0^{(n)} = \frac{1}{y^n}R_n(\log(y),\log|\log(y)|)
\end{equation}
for some rational functions $R_n$. The expressions for higher derivatives of $f(t,x_2)$ get more complicated, however there is a common feature for all of them, they all consist of sums of terms of the type 
\begin{equation}\label{eq:generalder}
    \frac{(f_0'')^{\alpha_2}(f_0''')^{\alpha_3}...(f_0^{(l)})^{\alpha_l}}{(1+tf_0'(g(t,x_2)))^k}
\end{equation}
with $k \in \mathbb{N}$ taking values between $n+1$ and $2n-1$, $l \in \mathbb{N}$ such that $l \leq n$, $\alpha_i \in \mathbb{N} \cup \{0\}$, $i = 2, ..., l$ and 
\begin{equation}
    \sum_{i = 2}^l i\alpha_i = k-1.
\end{equation}
This is not difficult to prove by induction, the claim clearly holds for $n = 2$ as can be observed in \eqref{eq:pax2ftx}.
On the other hand deriving terms of the form \eqref{eq:generalder} produces terms of the same form. Indeed, either the derivative acts on the numerator of the fraction \eqref{eq:generalder} and in that case the sum of $i\alpha_i$ increases by one, but due to the derivative of the composed function, $\paxx g(t,x_2)$ multiplies this fraction and therefore $k$ increases by one as well, see \eqref{eq:paxg}. If the derivative acts on the denominator, first $k$ increases by one, then another $f_0''$ appears in the numerator which increases the sum of $i\alpha_i$ by two and $\paxx g(t,x_2)$ multiplies the resulting fraction increasing $k$ by another one, altogether therefore $k$ increases by two and so does the sum $i\alpha_i$.

Combining this observation with \eqref{eq:f0dern} we end up with 
\begin{equation}
    \paxx^n f(t,x_2) \sim g(t,x_2)R'_n(\log(g(t,x_2)),\log|\log(g(t,x_2))|)
\end{equation}
with some rational function $R'_n$ and therefore $\paxx^n f(t,x_2) \rightarrow 0$ as $x_2 \rightarrow 0_+$. The proof is finished.

\end{proof}

In our further construction we actually need a slightly more general observation provided by the next lemma.

\begin{lemma}\label{l:f0h}
Let $h_-(x_2) \in C^\infty((-\infty,0)) \cap C^1((-\infty,0])$ and $h_+(x_2) \in C^\infty((0,\infty)) \cap C^1([0,\infty))$ be monotone functions such that $h_\pm(0) = 0$, $h_\pm'(0) = K_\pm$ for some $-\infty < K_- < 0 < K_+ < \infty$ and there exists a constant $C > 0$ such that for $|x_2|$ small enough
\begin{equation}\label{eq:hass}
    |h_\pm^{(n)}(x_2)| \leq \frac{C}{|x_2|^{n-1}}Q_\pm^n(\sqrt{|\log|x_2||}, \log|\log|x_2||)
\end{equation}
for all $n \geq 2$, where $Q_\pm^n$ are rational functions. Let $f_0$ be defined as in \eqref{eq:f0def} and let
\begin{align}
 f_-(T,x_2) &= a_- f_0(h_-(x_2)) + b_-   \\
 f_+(T,x_2) &= -a_+f_0(h_+(x_2)) + b_+
\end{align}
for $0 \leq |x_2| < \overline{\zeta}$ with $h(\overline{\zeta}) < \frac{\zeta_1}{2}$ and with some $a_\pm > 0$ and $b_\pm = \lambda_\pm \pm \frac{\zeta_2}{T}$. For $|x_2| \geq \overline{\zeta}$ define $f_\pm(T,x_2)$ in such a way that $\lambda_1(T,\cdot)$ defined as in \eqref{eq:BursmoothCWT} is smooth and strictly monotone with strictly monotone first derivatives for $x_2 \in (-\infty,0)$ and for $x_2 \in (0,\infty)$.
Then there exists an initial data $\lambda_1^0 \in C^\infty(\R)$ as in \eqref{eq:BurICsmoothCW} such that the solution $\lambda_1(t,x_2)$ to the Burgers equation \eqref{eq:Bur}-\eqref{eq:BurIC} satisfies $\lambda_1 \in C^\infty((0,T)\times \R)$ and $\lambda_1(T,x_2)$ has the form of \eqref{eq:BursmoothCWT}.
\end{lemma}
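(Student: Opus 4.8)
The plan is to run the proof of Lemma~\ref{l:f0} again, carrying the extra composition with $h_\pm$ through every step. First I would reduce, exactly as there (via the reflection and time reversal of Section~\ref{s:cw}), to the following one-sided statement: if $\widetilde f_0:=f_0\circ h$ is used as initial datum for the Burgers equation \eqref{eq:BurLemma21} in place of $f_0$, on a right neighbourhood of $0$, where $h$ is increasing with $h(0)=0$, $K:=h'(0)\in(0,\infty)$, and $h$ obeys \eqref{eq:hass}, then the solution satisfies \eqref{eq:prvnider} and \eqref{eq:ntader}. The left-hand side $x_2<0$ is brought to this form by the reflection $x_2\mapsto-x_2$ (which turns $h_-$, decreasing near $0^-$, into an increasing function near $0^+$ still satisfying \eqref{eq:hass}), and the two sign conventions of the statement by the affine maps $z\mapsto\pm a_\pm z+b_\pm$, which for $n\ge1$ merely rescale all derivative bounds; the passage from \eqref{eq:prvnider}--\eqref{eq:ntader} to a $C^\infty$ datum $\lambda_1^0$ of the form \eqref{eq:BurICsmoothCW} is then literally as in Lemma~\ref{l:f0}. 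Since $f_0$ and $h$ are increasing, $\widetilde f_0$ is strictly increasing near $0$, so the method of characteristics applies and \eqref{eq:paxg}--\eqref{eq:paxftx} hold with $\widetilde f_0$ replacing $f_0$; and because $\widetilde f_0'(y)=a\,f_0'(h(y))\,h'(y)$ with $h(y)\to0$, $f_0'(z)\to+\infty$ (by \eqref{eq:f0der}) and $h'(y)\to K>0$, we obtain $\widetilde f_0'(y)\to+\infty$ and hence \eqref{eq:prvnider}. More quantitatively, \eqref{eq:f0der} together with $h(y)=Ky(1+o(1))$ shows that $\widetilde f_0'(y)$ is comparable, up to a positive constant, to $\bigl(y\,|\log y|\,(\log|\log y|)^{2}\bigr)^{-1}$ as $y\to0_+$.

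The only genuinely new ingredient is the analogue of \eqref{eq:f0dern} for the composition. Using Fa\`a di Bruno's formula I would prove
\[
   \bigl|\widetilde f_0^{(n)}(y)\bigr|\ \le\ \frac{C_n}{y^{n}}\,\widetilde Q_n\bigl(\sqrt{|\log y|},\ \log|\log y|\bigr),\qquad n\ge2,
\]
for rational functions $\widetilde Q_n$. Indeed $\widetilde f_0^{(n)}(y)$ is a finite sum of terms $c\,f_0^{(m)}(h(y))\,\prod_{j\ge1}\bigl(h^{(j)}(y)\bigr)^{k_j}$ with $m=\sum_j k_j$ and $\sum_j jk_j=n$. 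For the factor $f_0^{(m)}(h(y))$ I would plug $h(y)=Ky(1+o(1))$ into \eqref{eq:f0dern}, so that $h(y)^{-m}=y^{-m}O(1)$ while $\log h(y)$ differs from $\log y$ by a bounded amount and $\log|\log h(y)|=\log|\log y|+o(1)$; bounded shifts of the logarithmic arguments are absorbed into the rational bound. For the factors $h^{(j)}(y)$ I would use boundedness of $h'$ when $j=1$ and the hypothesis \eqref{eq:hass} when $j\ge2$, which supplies the weight $y^{-\sum_{j\ge2}(j-1)k_j}$ times a rational function of $\sqrt{|\log y|},\log|\log y|$. The bookkeeping identity
\[
   \sum_{j\ge2}(j-1)k_j=\Bigl(\sum_{j\ge2}jk_j\Bigr)-\Bigl(\sum_{j\ge2}k_j\Bigr)=(n-k_1)-(m-k_1)=n-m
\]
then makes the powers of $y$ combine to exactly $y^{-n}$; since $\log y=-\bigl(\sqrt{|\log y|}\bigr)^{2}$ for small $y>0$, the logarithms coming from $f_0$ sit inside the same class, and summing the finitely many terms preserves the bound.

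Granting this, the rest is the argument of Lemma~\ref{l:f0} essentially unchanged, because the induction there uses only the implicit relation \eqref{eq:charinverse} and the formula \eqref{eq:paxg}: $\paxx^{n}f(t,x_2)$ is a finite sum of terms of the shape \eqref{eq:generalder} with $f_0$ replaced by $\widetilde f_0$, with $k$ ranging between $n+1$ and $2n-1$ and $\sum_i i\alpha_i=k-1$. Substituting the displayed upper bound for the $\widetilde f_0^{(i)}$ in the numerator (which yields $g(t,x_2)^{-(k-1)}$ times a slowly varying function of $\log g(t,x_2)$) and the two-sided lower bound for $\widetilde f_0'(g)$ in the denominator $(1+t\widetilde f_0'(g))^{k}$, one finds
\[
   \paxx^{n}f(t,x_2)\ \sim\ g(t,x_2)\,\widetilde R_n'\bigl(\sqrt{|\log g(t,x_2)|},\ \log|\log g(t,x_2)|\bigr)
\]
for a rational $\widetilde R_n'$, which tends to $0$ as $x_2\to0_+$ because $g(t,x_2)\to0$ while the prefactor grows at most like a power of a logarithm; this is \eqref{eq:ntader}. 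I expect the crux to be the second paragraph: the Fa\`a di Bruno bookkeeping (the exponent identity $\sum_{j\ge2}(j-1)k_j=n-m$ and the check that no summand exceeds $y^{-n}$ times a slowly varying factor), together with confirming that the two ``slowly varying'' classes appearing --- rational in $(\log,\log\log)$ from $f_0$, rational in $(\sqrt{\log},\log\log)$ from $h$ via \eqref{eq:hass} --- are closed under the products that occur.
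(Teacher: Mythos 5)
Your proposal is correct and follows essentially the same route as the paper: reduce to the one-sided case, use the method of characteristics with the implicit function $g$, verify \eqref{eq:prvnider} from the blow-up of $(f_0\circ h)'$, and run the same structural induction on terms of type \eqref{eq:generalder} to get $\paxx^n f \sim g$ times a slowly varying factor. The only (immaterial) organizational difference is that you bound the derivatives of the composed datum $f_0\circ h$ once and for all via Fa\`a di Bruno and then reuse Lemma \ref{l:f0} verbatim, whereas the paper carries the composition (derivatives of $f_0$ at $h(g)$ times derivatives of $h$ at $g$, controlled by \eqref{eq:f0dern} and \eqref{eq:hass}) through the induction directly.
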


\begin{proof}
We follow closely the proof of Lemma \ref{l:f0}, for simplicity we show the case $x_2 > 0$ and we write $h = h_+$. Again we want to prove that the solution $f(t,x_2)$ to the Burgers equation \eqref{eq:BurLemma21} with the initial condition 
\begin{equation}
    f(0,x_2) = f_0(h(x_2))
\end{equation}
has the properties \eqref{eq:prvnider}, \eqref{eq:ntader}. The function $g(t,x_2)$ is now defined as 
\begin{equation}\label{eq:charinverse2}
    x_2 = g(t,x_2) + t f_0(h(g(t,x_2))) 
\end{equation}
and thus
\begin{equation}\label{eq:paxg2}
    \paxx g(t,x_2) = \frac{1}{1+tf_0'(h(g(t,x_2)))h'(g(t,x_2))}.
\end{equation}
The solution is given by 
\begin{equation}
    f(t,x_2) = f_0(h(g(t,x_2)))
\end{equation}
and therefore
\begin{equation}\label{eq:paxftx2}
    \paxx f(t,x_2) = \frac{f_0'(h(g(t,x_2)))h'(g(t,x_2))}{1+tf_0'(h(g(t,x_2)))h'(g(t,x_2))}.
\end{equation}
The assumptions on $h$ then ensure that \eqref{eq:prvnider} holds.

The formula for the second derivative of $f$ is 
\begin{equation}\label{eq:pax2ftx2}
    \paxx^2 f(t,x_2) = \frac{f_0''(h(g(t,x_2)))(h'(g(t,x_2)))^2 + f_0'(h(g(t,x_2)))h''(g(t,x_2))}{(1+tf_0'(h(g(t,x_2)))h'(g(t,x_2)))^3},
\end{equation}
which implies 
\begin{align*}
    &\paxx^2 f(t,x_2)  \\ 
    &\quad \sim g(t,x_2) R_2'(\log(h(g(t,x_2))),\log|\log(h(g(t,x_2)))|)Q^2_+(\sqrt{|\log(g(t,x_2))|},\log|\log(g(t,x_2))|)
\end{align*}
for some rational functions $R_2'$ and $Q^2_+$ and therefore \eqref{eq:ntader} holds for $n = 2$.

The argument for higher derivatives is similar as in the proof of Lemma \ref{l:f0}, due to the behaviour of $f_0$ given by \eqref{eq:f0dern} and the assumption \eqref{eq:hass} we end up with 
\begin{align*}
    &\paxx^{(n)} f(t,x_2) \\ 
    &\quad \sim g(t,x_2) R_n'(\log(h(g(t,x_2))),\log|\log(h(g(t,x_2)))|)Q^n_+(\sqrt{|\log(g(t,x_2))|},\log|\log(g(t,x_2))|),
\end{align*}
which finishes the proof by imitating \eqref{eq:ntader} for all $n > 2$.
\end{proof}

Choosing the initial data \eqref{eq:EulerIC} in such a way that $w_1$ defined in \eqref{eq:RiemInv} is constant in $\R$ and $\lambda_1$ defined in \eqref{eq:CharSpeed} takes the form \eqref{eq:BurICsmoothCW} we obtain a smooth solution to the Euler system \eqref{eq:EulerCE}-\eqref{eq:EulerIC} on the time interval $(0,T)$ in the form $w_1 \equiv const$ on $[0,T]\times \R$ and $\lambda_1$ given by \eqref{eq:BursmoothCW}, \eqref{eq:BursmoothCWT}. Such solution satisfies the energy equality \eqref{eq:EulerEnergyIneq} with an equality sign.

The main part of the proof of Theorem \ref{t:main} then consists of proving that the solution on the time interval $[0,T]$ can be prolonged in a non-unique way. In particular we will take values of $(\rho, v)$ at time $t = T$ as the initial data and prove the existence of infinitely many admissible weak solutions starting from this initial data.

\section{Subsolutions}\label{s:ss}

Our aim in this section is to properly define admissible subsolutions. Unlike in previous works on the construction of non--unique admissible solutions, here it is not enough to work with piecewise constant subsolutions. First, we recall the crucial lemma based on the convex integration technique developed for the incompressible Euler equations which allows to pass from a single subsolution to infinitely many solutions, see \cite[Lemma 3.7]{ChiDelKre15}. In what follows, the symbol $\Sz$ stands for the space of symmetric $2\times 2$ matrices with zero trace and $\id$ denotes the identity matrix.

\begin{lemma}
Let $(\tilde{v},\tilde{u}) \in \R^2 \times \Sz$ and let $C > 0$ be such that $\tilde{v}\otimes\tilde{v} - \tilde{u} < \frac C2 \id$. For any open set $\Omega \subset \R \times \R^2$ there are infinitely many maps $(\underline{v}, \underline{u}) \in L^\infty(\R\times \R^2, \R^2 \times \Sz)$ such that
\begin{enumerate}
\item[(i)] $\underline{v}$ and $\underline{u}$ vanish identically outside $\Omega$
\item[(ii)] $\Div \underline{v} = 0$ and $\pat \underline{v} + \Div \underline{u} = 0$
\item[(iii)] $(\tilde{v} + \underline{v}) \otimes (\tilde{v} + \underline{v}) - (\tilde{u} + \underline{u}) = \frac C2 \id$ a.e. in $\Omega$.
\end{enumerate}
\end{lemma}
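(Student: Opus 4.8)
This is the standard ``passage from one subsolution to many solutions'' lemma of De Lellis--Sz\'ekelyhidi (it is \cite[Lemma 3.7]{ChiDelKre15}, an adaptation of \cite{DLSz1}--\cite{DLSz2}), so I would prove it by Tartar's plane-wave analysis of the linear constraints combined with a Baire category argument. First the linear part: the two equations in (ii) form a constant-coefficient first order system for $(\underline v,\underline u)\in\R^2\times\Sz$, with wave cone
\[
\Lambda=\Big\{(\bar v,\bar u)\in\R^2\times\Sz:\ \exists\,(\tau,\zeta)\in(\R\times\R^2)\setminus\{0\},\ \bar v\cdot\zeta=0,\ \tau\bar v+\bar u\,\zeta=0\Big\},
\]
so that for every $(\bar v,\bar u)\in\Lambda$ and every scalar profile the plane wave $(t,x)\mapsto(\bar v,\bar u)\,\psi(\tau t+\zeta\cdot x)$ solves (ii). The system moreover admits a (higher order) potential; this is the device allowing a single plane wave to be cut off to a genuine element of $\DC(\Omega;\R^2\times\Sz)$ at the price of a controlled $L^2$ error.

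Next the geometry. Encode (iii) by the constitutive set $K=\{(v,u)\in\R^2\times\Sz:\ v\otimes v-u=\tfrac C2\id\}$ and note that the hypothesis $\tilde v\otimes\tilde v-\tilde u<\tfrac C2\id$ says exactly that $(\tilde v,\tilde u)$ lies in the open set $U=\{(v,u):\ v\otimes v-u<\tfrac C2\id\}$, which is bounded (taking traces forces $|v|^2<C$, and then all eigenvalues of $v\otimes v-u$ lie in $(-\tfrac C2,\tfrac C2)$). The elementary linear-algebra fact to establish is that $U$ lies in the $\Lambda$-interior of the convex hull of $K$: for every $(v,u)\in U$ there is $(\bar v,\bar u)\in\Lambda$ with $|(\bar v,\bar u)|\ge c\,\operatorname{dist}((v,u),K)$ such that the whole segment $\{(v,u)+s(\bar v,\bar u):\ |s|\le1\}$ stays in $U$; here the driving quantity is the gap $\tfrac C2-\lambda_{\max}(v\otimes v-u)>0$, which controls $\operatorname{dist}((v,u),K)$ from above and below.

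Now the Baire category part. Fix the bounded open $\Omega$ and let $X_0$ be the set of $(\underline v,\underline u)\in\DC(\Omega;\R^2\times\Sz)$ solving (ii) with $(\tilde v+\underline v,\tilde u+\underline u)(t,x)\in U$ everywhere and obeying a fixed $L^\infty$ bound (available since $U$ is bounded); $0\in X_0$ because $(\tilde v,\tilde u)\in U$, so $X_0\neq\emptyset$. Let $X$ be the weak-$*$ closure of $X_0$ in $L^\infty(\Omega)$; on the fixed ball this topology is metrizable and $X$ is complete. The inclusion $X\hookrightarrow L^2(\Omega)$ (strong topology on the target) is a pointwise limit of weak-$*$ continuous maps (mollifications), hence of Baire class one, so its set $\mathcal R\subset X$ of continuity points is residual. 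The crucial perturbation estimate, obtained by combining the previous paragraph with the localization of plane waves and a Vitali covering of $\Omega$, is: whenever $(\underline v,\underline u)\in X_0$ fails (iii) on a set of positive measure, one can find $(\underline v',\underline u')\in X_0$ with $(\underline v',\underline u')-(\underline v,\underline u)$ of high frequency and $\|\underline v'-\underline v\|_{L^2(\Omega)}^2\ge\beta\!\left(\int_\Omega\operatorname{dist}^2\big((\tilde v+\underline v,\tilde u+\underline u),K\big)\right)$ for a fixed increasing $\beta$ with $\beta(0)=0$. Iterating this forces every $(\underline v,\underline u)\in\mathcal R$ to satisfy $\operatorname{dist}((\tilde v+\underline v,\tilde u+\underline u),K)=0$ a.e., i.e.\ (iii); such an element also lies in $X$, hence satisfies (i) and (ii). Finally a residual subset of a complete metric space without isolated points is uncountable (equivalently, split $\Omega$ into countably many disjoint open pieces and run the argument on each), so there are in fact infinitely many such $(\underline v,\underline u)$.

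\emph{Main obstacle.} The only non-formal ingredient is the perturbation estimate: converting the $\Lambda$-convex information about $U$ into an honest compactly supported exact solution of (ii) with a quantitative $L^2$ lower bound. This rests on the potential representation of the linear system, used to localize a wave-cone segment into a $\DC(\Omega)$ perturbation with controlled $L^2$ size, together with a summation over a Lipschitz/Vitali covering of $\{\,\operatorname{dist}((\tilde v+\underline v,\tilde u+\underline u),K)>0\,\}$; this is precisely the content of \cite[Lemma 3.7]{ChiDelKre15}, which I would invoke.
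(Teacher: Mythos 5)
Your outline is correct and follows exactly the route the paper relies on: the paper does not reprove this lemma but simply cites \cite[Lemma 3.7]{ChiDelKre15}, whose proof is the Tartar wave-cone/localization plus Baire category argument you sketch, and the same scheme (Baire-1 map, continuity points, perturbation ``Property'') is what the paper reproduces when it generalizes this statement in Lemma \ref{l:CI}. So your proposal matches the paper's approach; invoking \cite[Lemma 3.7]{ChiDelKre15} for the quantitative perturbation step is precisely what the authors do.
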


We need to generalize this lemma in order to be able to work with piecewise continuous subsolutions for the compressible Euler equations instead of piecewise constant ones. We can prove the following Lemma.

\begin{lemma}\label{l:CI}
Let $\Omega \subset \R^2 \times \R$ be an open set. Let $(\tilde{\rho},\tilde{m},\tilde{u},\tilde{C}) \in C(\Omega,\R^+ \times \R^2 \times \Sz \times \R^+)$ such that 
\begin{equation}
\frac{\tilde{m}\otimes\tilde{m}}{\tilde{\rho}} - \tilde{u} < \frac {\tilde{C}}{2} \id
\end{equation} 
pointwise in $\Omega$ in the sense of positive definiteness. Let us assume furthermore that $\tilde{C}$ is a bounded function on $\Omega$. Then, there are infinitely many maps $(\underline{m},\underline{u}) \in L^\infty(\R\times \R^2, \R^2 \times \Sz)$ such that
\begin{enumerate}
\item[(i)] $\underline{m}$ and $\underline{u}$ vanish identically outside $\Omega$.
\item[(ii)] 
\begin{align}
\Div \underline{m} &= 0 \label{eq:INCE}\\
\pat \underline{m} + \Div \underline{u} &= 0 \label{eq:INME}.
\end{align}
\item[(iii)] It holds 
\begin{equation}\label{eq:SSeq}
\frac{(\tilde{m} + \underline{m})\otimes(\tilde{m} + \underline{m})}{\tilde{\rho}}  - (\tilde{u} + \underline{u}) = \frac {\tilde{C}}{2} \id
\end{equation}
a.e. in $\Omega$
\end{enumerate}
\end{lemma}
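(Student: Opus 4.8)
\emph{Plan.} I would adapt, in a by now standard way, the Baire--category version of the De Lellis--Sz\'ekelyhidi convex integration scheme (as in \cite[Lemma 3.7]{ChiDelKre15} and \cite{DLSz1,DLSz2}), the only genuinely new point being that the ``background'' fields are now merely continuous rather than constant. Let $X_0$ be the set of smooth maps $(\underline m,\underline u)$ compactly supported in $\Omega$ which satisfy \eqref{eq:INCE}, \eqref{eq:INME} and the strict subsolution inequality $\frac{(\tilde m+\underline m)\otimes(\tilde m+\underline m)}{\tilde\rho}-(\tilde u+\underline u)<\frac{\tilde C}{2}\id$ pointwise in $\Omega$; by assumption $(0,0)\in X_0$. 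This inequality together with boundedness of $\tilde C$ (and, in the application, boundedness of $\Omega$ and of $\tilde\rho$ away from $0$ and $\infty$; in general one exhausts $\Omega$ by such subsets and diagonalizes at the end) yields a uniform $L^\infty$ bound on $X_0$. Let $X$ be the weak--$*$ closure of $X_0$ inside the corresponding ball of $L^\infty(\R\times\R^2;\R^2\times\Sz)$: then $X$ is compact, convex and metrizable, every element of $X$ still satisfies (i) and (ii) (vanishing a.e. outside $\Omega$ and the linear relations \eqref{eq:INCE}--\eqref{eq:INME} are stable under weak--$*$ limits), and, since $\underline m\mapsto\frac{(\tilde m+\underline m)\otimes(\tilde m+\underline m)}{\tilde\rho}$ is convex, the non--strict inequality $\frac{(\tilde m+\underline m)\otimes(\tilde m+\underline m)}{\tilde\rho}-(\tilde u+\underline u)\le\frac{\tilde C}{2}\id$ persists on $X$. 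Because a positive semidefinite $2\times2$ matrix of zero trace vanishes, for $(\underline m,\underline u)\in X$ property (iii) holds a.e. in $\Omega$ if and only if $|\tilde m+\underline m|^2=\tilde C\,\tilde\rho$ a.e., so it suffices to produce infinitely many elements of $X$ at which the nonnegative functional
\[
I(\underline m,\underline u):=\int_\Omega\Big(\tilde C-\frac{|\tilde m+\underline m|^2}{\tilde\rho}\Big)\dx\dt
\]
vanishes.

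\emph{Perturbation step.} The heart of the matter is the following claim: there is an increasing $\beta:(0,\infty)\to(0,\infty)$ such that whenever $(\underline m,\underline u)\in X_0$ has $I(\underline m,\underline u)\ge\alpha>0$ there are $(\underline m_j,\underline u_j)\in X_0$ with $(\underline m_j,\underline u_j)\rightharpoonup(\underline m,\underline u)$ weakly--$*$ and $\limsup_{j}I(\underline m_j,\underline u_j)\le I(\underline m,\underline u)-\beta(\alpha)$. To see it, cover (up to a null set) a set of measure $\ge c_1\alpha$ on which the scalar deficit $\tilde C-\frac{|\tilde m+\underline m|^2}{\tilde\rho}$ exceeds $c_2\alpha$ — hence on which the matrix gap $\frac{\tilde C}{2}\id-[\,\cdots\,]$ is positive definite — by finitely many space--time cubes so small that $\tilde\rho,\tilde m,\tilde u,\tilde C,\underline m,\underline u$ oscillate arbitrarily little on each cube. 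On every such cube $Q$, freeze the fields at its centre, apply one step of the convex integration scheme on $Q$ with target constant chosen slightly below $\min_{\overline Q}\tilde C$ (by an amount of the order of the oscillation of the data over $Q$), and obtain a smooth perturbation $(\underline m_Q,\underline u_Q)$ supported in $Q$, solving \eqref{eq:INCE}--\eqref{eq:INME}, preserving the strict subsolution inequality with the \emph{true} (non--frozen) coefficients on $Q$, and with $\|\underline m_Q\|_{L^2(Q)}^2$ bounded below by a geometric constant times $\int_Q$ of the scalar deficit; this is precisely the quantitative mechanism behind the constant--coefficient Lemma, cf. \cite{DLSz1,DLSz2,ChiDelKre15}. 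Adding the disjointly supported $(\underline m_Q,\underline u_Q)$ to $(\underline m,\underline u)$ produces an element of $X_0$; letting the cubes shrink, the added field tends to $0$ weakly--$*$, and, using that $\tilde\rho$ is bounded above,
\begin{align*}
\int_\Omega\frac{|\tilde m+\underline m_j|^2-|\tilde m+\underline m|^2}{\tilde\rho}\dx\dt
&=2\int_\Omega\frac{(\tilde m+\underline m)\cdot(\underline m_j-\underline m)}{\tilde\rho}\dx\dt\\
&\quad+\int_\Omega\frac{|\underline m_j-\underline m|^2}{\tilde\rho}\dx\dt,
\end{align*}
whose first term vanishes in the limit while the second is bounded below by a fixed multiple of $\|\underline m_j-\underline m\|_{L^2}^2\ge\beta(\alpha)>0$; as $\tilde C$ does not change, $I$ drops by at least $\beta(\alpha)$ in the limit.

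\emph{Conclusion via Baire.} Since $(\underline m,\underline u)\mapsto\int_\Omega\frac{|\tilde m+\underline m|^2}{\tilde\rho}\dx\dt$ is bounded and weakly--$*$ lower semicontinuous on the compact metric space $X$, it is an increasing pointwise limit of weakly--$*$ continuous functions, so $I$ is of Baire class one and its set $R$ of points of continuity is residual in $X$. If some $(\underline m,\underline u)\in R$ had $I(\underline m,\underline u)=\alpha>0$, pick an open $U\ni(\underline m,\underline u)$ in $X$ on which $I$ oscillates by less than $\min(\alpha/2,\beta(\alpha/2))$ and, by density of $X_0$, a point $(\underline m_1,\underline u_1)\in X_0\cap U$; then $I(\underline m_1,\underline u_1)>\alpha/2$, the sequence from the perturbation step converges weakly--$*$ to $(\underline m_1,\underline u_1)\in U$ hence eventually lies in $U$, yet along it $I\le I(\underline m_1,\underline u_1)-\beta(\alpha/2)$ — contradicting the oscillation bound. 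Thus $I\equiv0$ on $R$. Finally $X$ is convex and, by the perturbation step applied to $(0,0)$ (where $I(0,0)>0$, since the strict subsolution hypothesis gives $|\tilde m|^2<\tilde C\tilde\rho$), it contains more than one point, hence has no isolated point; a residual subset of a perfect compact metric space is uncountable, so $R$ supplies infinitely many maps $(\underline m,\underline u)\in L^\infty(\R\times\R^2;\R^2\times\Sz)$ with the properties (i), (ii) and (iii).

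\textbf{Main obstacle.} Everything except the perturbation step is the standard Baire machinery. The real work is there: freezing the continuous data on a fine grid and, decisively, lowering the target constant of the elementary step by exactly the right margin, so that the subsolution inequality \emph{with the true coefficients} still holds on the whole cube while the $L^2$--gain of the correction remains controlled from below by the local scalar deficit. A little extra care is needed for the a priori $L^\infty$ bounds when $\Omega$ is unbounded (exhaustion and diagonalization) and for the elementary identification of (iii) with the scalar equation $|\tilde m+\underline m|^2=\tilde C\tilde\rho$.
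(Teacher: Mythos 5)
Your proposal is correct and follows essentially the same route as the paper: the same space $X_0$ of smooth compactly supported strict subsolution perturbations and its weak-$*$ closure $X$, a Baire-category reduction to points of continuity, and a quantitative perturbation property that lowers an energy deficit by a definite amount at every point of $X_0$ where (iii) fails. The only non-essential differences are that you phrase the deficit as a single functional over $\Omega$ (the paper instead uses restriction maps $I_N$ to bounded cylinders $(-N,N)\times B_N(0)$, which handles unbounded $\Omega$ without your exhaustion caveat) and that you sketch the localization/coefficient-freezing proof of the key perturbation step, which the paper imports from \cite[Lemma 4.5]{Ch}.
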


In order to prove Lemma \ref{l:CI}, we follow the strategy outlined in \cite{ChiDelKre15} for the proof of Lemma 3.7 therein, but we need to borrow some ingredients also from \cite{Ch}.

\subsection{Proof of Lemma \ref{l:CI}} We define $X_0$ to be the space of 
$(\underline{m}, \underline{u})\in C^\infty_c (\Omega, \R^2\times
\Sym_0^{2\times 2})$ which satisfy (ii) and the pointwise inequality 
$\frac{(\tilde{m} + \underline{m})\otimes(\tilde{m} + \underline{m})}{\tilde{\rho}}  - (\tilde{u} + \underline{u}) < \frac {\tilde{C}}{2} \id$.
We then take the closure $X$ of $X_0$ in the $L^\infty$ weak$^\star$ topology.
Since $\tilde{C}$ is a bounded continuous function on $\Omega$, $X$ is a bounded (weakly$^\star$)
closed subset of $L^\infty$. This implies that the $L^\infty$ weak$^\star$ topology is metrizable on $X$, thus producing a complete metric space $(X,d)$. 
Observe that any element in $X$ satisfies (i) and (ii). Our aim is
to show that on a residual set (in the sense of Baire category) (iii) holds. 
Following \cite{ChiDelKre15}, we define for any $N\in \N\setminus \{0\}$ the map $I_N$ as follows: to $(\underline{m}, \underline{u})$ we associate
the corresponding restrictions of these maps to $(-N,N)\times B_N (0) $. We then consider $I_N$ as a map from $(X,d)$ to 
$Y$, where $Y$ is the space $L^\infty ((-N,N)\times B_N (0), \R^2\times \Sym_0^{2\times 2}$) endowed with the {\em strong}
$L^2$ topology. Arguing as in \cite[Lemma 4.5]{DLSz1} it is easily seen that $I_N$ is a Baire-1 map and hence, from
a classical theorem in Baire category, its points of continuity form a residual set in $X$. We claim that
\begin{itemize}
 \item[(CLAIM)] if $(\underline{m},\underline{u})$ is a point of continuity of $I_N$, then (iii) holds a.e. on $(-N,N)\times B_N (0)$.
\end{itemize}
(CLAIM) implies then (iii) for those maps at which {\em all} $I_N$ are continuous (which is also a residual set). 
The proof of (CLAIM) is achieved as in \cite[Lemma 4.6]{DLSz1} showing that:
\begin{itemize}
\item[(Property)] If $(\underline{m}, \underline{u})\in X_0$, then  
there is a sequence
$(m_k, u_k)\subset X_0$ converging weakly$^\star$ to $(\underline{m},\underline{u})$ for which 
\[
\liminf_k \|\tilde{m} + m_k\|_{L^2 (\Gamma)} \geq 
\|\tilde{m} + \underline{m}\|^2_{L^2 (\Gamma)} + \beta \left( \int_\Gamma \tilde{C}\tilde{\rho}  \dxdt - \|\tilde{m} + 
\underline{m}\|^2_{L^2 (\Gamma)}\right)^2\, ,
\]
where $\Gamma= (-N,N)\times B_N (0)$ and $\beta$ depends only on $\Gamma$. 
\end{itemize}
Indeed
assuming that (Property) holds, fix then a point
$(\underline{m}, \underline{u})\in X$ where $I_N$ is continuous and assume by contradiction
that (iii) does not hold on $\Gamma$. By definition of $X$ there is a sequence 
$(\underline{m}_k, \underline{u}_k)\subset X_0$ converging weakly$^\star$ to 
$(\underline{m}, \underline{u})$. Since the latter
is a point of continuity for $I_N$, we then have that $\underline{m}_k \to \underline{m}$ strongly in $L^2 (\Gamma)$. 
We apply (Property) to each $(\underline{m}_k, \underline{u}_k)$ and find a sequence $\{(m_{k,j}, u_{k,j})\}$ such that
\[
\liminf_j \|\tilde{m} + m_{k,j}\|_{L^2 (\Gamma)} \geq 
\|\tilde{m} + \underline{m}_k\|^2_{L^2 (\Gamma)} + \beta \left( \int_\Gamma \tilde{C}\tilde{\rho}  \dxdt - \|\tilde{m} + 
\underline{m}_k\|^2_{L^2 (\Gamma)}\right)^2\, 
\]
and $(m_{k,j}, u_{k,j})\rightharpoonup^\star (\underline{m}_k, \underline{u}_k)$.
A standard diagonal argument then allows to conclude the existence of a sequence $(m_{k, j(k)}, u_{k, j(k)})$
which converges weakly$^\star$ to $(\underline{m}, \underline{u})$ and such that
\[
 \liminf_k \|\tilde{m} + m_{k,j (k)}\|_{L^2 (\Gamma)} \geq 
\|\tilde{m} + \underline{m}\|^2_{L^2 (\Gamma)} + \beta \left(\int_\Gamma \tilde{C}\tilde{\rho}  \dxdt - \|\tilde{m} + 
\underline{m}\|^2_{L^2 (\Gamma)}\right)^2 > \|\tilde{m} + \underline{m}\|^2_{L^2 (\Gamma)}\, .
\]
However this contradicts the assumption that $(\underline{m}, \underline{u})$ is a point of continuity for $I_N$.
Thus, everything is reduced to proving (Property), i.e. to constructing the sequence
$(m_k, u_k)$. For the proof of Property we refer the reader to \cite[Lemma 4.5]{Ch} whose statement is analogous; the only two differences are that on the one hand the role of $\tilde{C}$ is played in \cite{Ch} by the function $\chi$  which depends on time only and on the other hand here $\tilde{\rho}$ depends not only on the space variable as in \cite{Ch}, but also on time. However these differences are not substantial for the proof of \cite[Lemma 4.5]{Ch} (and hence of (Property)), which can be carried out similarly under the current setting.

\subsection{From subsolutions to solutions}
As we stated in the previous section, we assume here that the initial data for the isentropic Euler equations take the form

\begin{equation}\label{eq:ICGR}
(\rho^0,v^0) = \left\{\begin{split}
&(\rho^0_-,v^0_-) \qquad \text{ for } x_2 < 0 \\
&(\rho^0_+,v^0_+) \qquad \text{ for } x_2 > 0,
\end{split} \right.
\end{equation}
where $(\rho^0_\pm,v^0_\pm)$ are $C^\infty$ functions on $(-\infty,0)$ and $(0,\infty)$ respectively such that there exists $\zeta_1 > 0$ and constants $(R_\pm, V_\pm) \in \R^+\times \R^2$ such that 
\begin{equation}\label{eq:ICGRProp}
\begin{split}
(\rho^0_-,v^0_-) &= (R_-,V_-) \qquad \text{ for } x_2 < -\zeta_1 \\
(\rho^0_+,v^0_+) &= (R_+,V_+) \qquad \text{ for } x_2 > \zeta_1.
\end{split}
\end{equation}

Moreover we use the momentum formulation of the Euler equations instead of using the velocity, i.e. we introduce $m = \rho v$ and rewrite the Euler system \eqref{eq:EulerCE}-\eqref{eq:EulerIC} to

\begin{align} \label{eq:EEm1}
\pat \rho + \Div m &= 0 \\
\pat m + \Div \left(\frac{m \otimes m}{\rho}\right) + \nabla_x p(\rho) &= 0  \label{eq:EEm2} \\
(\rho,m)(\cdot,0) &= (\rho^0, m^0). \label{eq:EEmI}
\end{align}

The admissibility (or energy) inequality \eqref{eq:EulerEnergyIneq} is rewritten as
\begin{equation}\label{eq:EEin}
\pat \left(\rho e(\rho) + \frac 12\frac{\abs{m}^2}{\rho}\right) + \Div \left(\left(\rho e(\rho) + p(\rho) + \frac 12 \frac{\abs{m}^2}{\rho}\right) \frac{m}{\rho} \right) \leq 0.
\end{equation}

In order to define subsolutions we will work with in this paper we start with a generalized notion of fan partition.

\begin{definition}
A generalized fan partition of $(0,\infty) \times \R^2$ consists of three open sets $P_-,P_1,P_+$ such that
\begin{align}
P_- &= \left\{(t,x): t > 0 \quad \text{ and } \quad x_2 < \widetilde{\nu}_-(t)\right\} \\
P_1 &= \left\{(t,x): t > 0 \quad \text{ and } \quad \widetilde{\nu}_-(t) <  x_2 < \widetilde{\nu}_+(t)\right\} \\
P_+ &= \left\{(t,x): t > 0 \quad \text{ and } \quad \widetilde{\nu}_+(t) < x_2\right\},
\end{align}
where $\widetilde{\nu}_\pm(t)$ is a couple of continuous functions satisfying $\widetilde{\nu}_-(t) < \widetilde{\nu}_+(t)$ for all $t > 0$ and $\widetilde{\nu}_-(0) = \widetilde{\nu}_+(0) = 0.$
\end{definition}

\begin{definition}
A generalized fan subsolution to the compressible Euler equations is a quadruple of piecewise continuous functions $(\overline{\rho},\overline{m},\overline{u},\overline{C}) : (0,\infty)\times \R^2 \rightarrow (\R^+,\R^2,\Sz,\R^+)$ satisfying the following properties.
\begin{itemize}
\item[(i)] There exists a generalized fan partition $P_-,P_1,P_+$ of $ (0,\infty)\times \R^2$ such that 
\begin{equation}
(\overline{\rho},\overline{m},\overline{u},\overline{C}) = \sum_{i=-,1,+} (\rho_i,m_i,u_i,C_i) \chi_{P_i},
\end{equation}
where $(\rho_i,m_i,u_i,C_i)$ are continuous functions on $P_i$, $u_\pm = \frac{m_\pm \otimes m_\pm}{\rho_\pm} - \frac 12 \frac{\abs{m_\pm}^2}{\rho_\pm}\id$, $C_\pm = \frac{\abs{m_\pm}^2}{\rho_\pm}$ and 
\begin{equation}\label{eq:SSIC}
\lim_{t \rightarrow 0^+} (\rho_\pm,m_\pm)(t,x) = (\rho^0_\pm,m^0_\pm)(x) \qquad \text{ for } \pm x > 0.
\end{equation}
\item[(ii)] It holds 
\begin{equation}\label{eq:SSCond}
\frac{m_1\otimes m_1}{\rho_1} - u_1 < \frac{C_1}{2} \id
\end{equation}
pointwise in $P_1$ in the sense of positive definiteness.
\item[(iii)] The quadruple $(\overline{\rho},\overline{m},\overline{u},\overline{C})$ solves the following system of partial differential equations in the sense of distributions
\begin{align}
\pat \overline{\rho} + \Div \overline{m} &= 0 \label{eq:SSCE} \\
\pat \overline{m} + \Div \overline{u} + \nabla_x \left( p(\overline{\rho}) + \frac{\overline{C}}{2} \right) &= 0.\label{eq:SSME}
\end{align}

\end{itemize}
\end{definition}

\begin{definition}
A generalized fan subsolution $(\overline{\rho},\overline{m},\overline{u},\overline{C})$ to the compressible Euler equations is called admissible if it satisfies that
\begin{itemize}
\item[(i)] there exists $K(t) : (0,\infty) \rightarrow \R^+$ such that for all $(x,t) \in P_1$ it holds 
\begin{equation}\label{eq:Kdef}
\left(e(\rho_1) + \frac{p(\rho_1)}{\rho_1} + \frac{C_1}{2\rho_1}\right)(x,t) = K(t),
\end{equation}
\item[(ii)] the following inequality is satisfied in the sense of distributions
\begin{equation}\label{eq:SSIN}
\pat \left(\overline{\rho}e(\overline{\rho}) + \frac{\overline{C}}{2}\right) + \Div\left(\left(\overline{\rho}e(\overline{\rho}) + p(\overline{\rho}) + \frac{\overline{C}}{2}\right)\frac{\overline{m}}{\overline{\rho}}\right) \leq 0.
\end{equation}
\end{itemize}
\end{definition}

The importance of assumptions in the definitions of the admissible generalized fan subsolution is revealed in the following Proposition.
\begin{propo}\label{p:main}
Let $p$ be any $C^1$ function and $(\rho^0,m^0)$ be such that there exists at least one admissible generalized fan subsolution $(\overline{\rho},\overline{m},\overline{u},\overline{C})$ to the Euler equations \eqref{eq:EEm1}--\eqref{eq:EEm2} with initial data \eqref{eq:EEmI}. Then there exists infinitely many bounded admissible weak solutions $(\rho,m)$ to \eqref{eq:EEm1}--\eqref{eq:EEmI} such that $\rho = \overline{\rho}$.
\end{propo}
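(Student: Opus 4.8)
The plan is to follow the now-standard reduction from subsolutions to solutions via convex integration, adapting it to the generalized (non-piecewise-constant) setting. The starting point is the observation that in the region $P_1$ the subsolution data $(\rho_1, m_1, u_1, C_1)$ satisfy the strict inequality \eqref{eq:SSCond} pointwise, and $C_1$ is bounded on any compact subset of $P_1$; thus Lemma \ref{l:CI} applies with $\Omega = P_1$ (or rather with $\Omega$ an exhaustion of $P_1$ by open sets on which $\overline{C}$ is bounded, taking care near $t=0$). This produces infinitely many perturbations $(\underline{m}, \underline{u}) \in L^\infty$, supported in $P_1$, solving the linear constraints \eqref{eq:INCE}--\eqref{eq:INME} and achieving the pointwise equality \eqref{eq:SSeq} a.e. in $P_1$.

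Next I would define the candidate solution by $\rho := \overline{\rho}$ and $m := \overline{m} + \underline{m}$, with the convention that $\underline{m}$ is extended by zero outside $P_1$. The task is then to verify that $(\rho, m)$ is an admissible weak solution of \eqref{eq:EEm1}--\eqref{eq:EEmI}. For the continuity equation \eqref{eq:EEm1}: $\pat \rho + \Div m = (\pat \overline{\rho} + \Div \overline{m}) + \Div \underline{m} = 0$ using \eqref{eq:SSCE} and \eqref{eq:INCE}. For the momentum equation \eqref{eq:EEm2}, the key point is that on $P_1$ the equality \eqref{eq:SSeq} together with $\overline{u}$ being trace-free forces
\[
\frac{m \otimes m}{\rho} = \frac{(\tilde m + \underline m)\otimes(\tilde m + \underline m)}{\tilde\rho} = \tilde u + \underline u + \frac{\tilde C}{2}\id = \overline{u} + \underline{u} + \frac{\overline C}{2}\id \quad \text{a.e. in } P_1,
\]
so that $\Div\!\left(\frac{m\otimes m}{\rho}\right) + \Grad p(\rho) = \Div(\overline u + \underline u) + \Grad\!\big(p(\overline\rho) + \tfrac{\overline C}{2}\big)$; combining \eqref{eq:SSME} with \eqref{eq:INME} gives that this vanishes in $P_1$. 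Outside $P_1$, where $\underline m = 0$, one uses that by the definition of the subsolution $u_\pm = \frac{m_\pm\otimes m_\pm}{\rho_\pm} - \frac12\frac{|m_\pm|^2}{\rho_\pm}\id$ and $C_\pm = \frac{|m_\pm|^2}{\rho_\pm}$, so again $\overline u + \frac{\overline C}{2}\id = \frac{\overline m\otimes\overline m}{\overline\rho}$ there, and \eqref{eq:SSME} reduces to \eqref{eq:EEm2}. One must also check there are no spurious distributional contributions concentrated on the interfaces $x_2 = \widetilde\nu_\pm(t)$: since $(\overline\rho,\overline m,\overline u,\overline C)$ already solves \eqref{eq:SSCE}--\eqref{eq:SSME} in the sense of distributions across those interfaces (this is part of (iii) of the subsolution definition), and $\underline m, \underline u$ are genuine distributional solutions of the linear system vanishing near the interfaces, no jump terms arise. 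The initial data \eqref{eq:EEmI} are attained because $\underline m$ is supported in $P_1$ which shrinks to $\{x_2 = 0\}$ as $t\to 0^+$, while $\overline\rho, \overline m$ attain $(\rho^0, m^0)$ by \eqref{eq:SSIC}.

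For admissibility, i.e. \eqref{eq:EEin}, I would use the energy equality built into the subsolution on $P_1$: condition \eqref{eq:Kdef} says that $e(\rho_1) + \frac{p(\rho_1)}{\rho_1} + \frac{C_1}{2\rho_1} = K(t)$ is independent of $x$ on $P_1$. Since $|m|^2/\rho = \mathrm{tr}\big(\frac{m\otimes m}{\rho}\big) = \mathrm{tr}(\overline u + \underline u) + \overline C = \overline C$ a.e. in $P_1$ (because $\overline u + \underline u$ is trace-free), the energy density $\rho e(\rho) + \frac12\frac{|m|^2}{\rho} = \overline\rho e(\overline\rho) + \frac{\overline C}{2}$ and the energy flux $\big(\rho e(\rho) + p(\rho) + \frac12\frac{|m|^2}{\rho}\big)\frac{m}{\rho} = K(t)\,m$ on $P_1$; using $\Div m = -\pat\overline\rho$ and the continuity equation one checks that the distributional energy balance for $(\rho,m)$ coincides exactly with \eqref{eq:SSIN} evaluated for the subsolution, hence the inequality \eqref{eq:SSIN} assumed in the definition of admissibility transfers verbatim to \eqref{eq:EEin}. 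Outside $P_1$ the solution coincides with the subsolution which solves the energy identity with equality there (or at least satisfies \eqref{eq:SSIN}), and again there are no concentrations on the interfaces. Finally, infinitely many distinct choices of $(\underline m, \underline u)$ from Lemma \ref{l:CI} yield infinitely many distinct $m$, all with the same $\rho = \overline\rho$.

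The main obstacle I anticipate is the bookkeeping near $t = 0$ and on the moving interfaces $x_2 = \widetilde\nu_\pm(t)$: one must ensure that Lemma \ref{l:CI} can be applied on $P_1$ despite $\overline C$ possibly being unbounded as $t\to 0^+$ (handled by exhausting $P_1$ and a diagonal argument, or by a priori bounds on $C_1$ coming from the explicit construction in Section \ref{s:main}), and that the perturbation $(\underline m,\underline u)$, being only $L^\infty$ and supported strictly inside the open set $P_1$, produces no distributional interface terms when glued to the subsolution. The algebraic identities relating $\frac{m\otimes m}{\rho}$, $\overline u$, $\overline C$ and the energy are routine once \eqref{eq:SSeq}, \eqref{eq:SSCond}, \eqref{eq:Kdef} and the structure of $u_\pm, C_\pm$ are in hand; the delicate point is purely the interface/initial-trace analysis, which is where the generalized fan partition and the continuity of $(\rho_i, m_i, u_i, C_i)$ up to $t=0$ in the subsolution definition do the work.
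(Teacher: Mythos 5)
Your argument is essentially the paper's own proof: apply Lemma \ref{l:CI} on $P_1$, set $\rho=\overline{\rho}$, $m=\overline{m}+\underline{m}$, recover \eqref{eq:EEm1}--\eqref{eq:EEm2} from \eqref{eq:SSeq} combined with the defining relations for $u_\pm, C_\pm$, transfer \eqref{eq:SSIN} to \eqref{eq:EEin} via \eqref{eq:Kdef} and $\Div\underline{m}=0$, and attain the initial data because $P_1\cap\{t=\tau\}$ shrinks as $\tau\to 0^+$. Your extra caution about boundedness of $\overline{C}$ and interface terms is reasonable but does not change the route, which coincides with the paper's.
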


\begin{proof}
We use Lemma \ref{l:CI} in the region $P_1$ with $(\tilde{\rho},\tilde{m},\tilde{u},\tilde{C}) = (\rho_1,m_1,u_1,C_1)$ and achieve infinitely many maps $(\underline{m},\underline{u})$. We set $\rho = \overline{\rho}$, $m = \overline{m} + \underline{m}$, $u = \overline{u} + \underline{u}$ and $C = \overline{C}$. We need to show that $(\rho,m)$ is an admissible weak solution to the Euler equations \eqref{eq:EEm1}-\eqref{eq:EEmI}.

First of all we observe that due to \eqref{eq:SSeq} we know that in the region $P_1$ it holds
\begin{align}\label{eq:AAA1}
u &= \frac{m\otimes m}{\rho} - \frac 12 \frac{\abs{m}^2}{\rho}\id \\
C &= \frac{\abs{m}^2}{\rho},\label{eq:AAA2}
\end{align}
whereas we already know that \eqref{eq:AAA1}-\eqref{eq:AAA2} hold in regions $P_-$ and $P_+$ due to the definition of the subsolution.

Combining \eqref{eq:INCE} with \eqref{eq:SSCE} we easily observe that $(\rho,m)$ satisfy \eqref{eq:EEm1}. Similarly combining \eqref{eq:INME} with \eqref{eq:SSME} we get
\begin{equation}
\pat m + \Div u + \nabla_x \left( p(\rho) + \frac{C}{2} \right) = 0
\end{equation}
and plugging in \eqref{eq:AAA1} and \eqref{eq:AAA2} we get \eqref{eq:EEm2}. Finally, we observe that using \eqref{eq:Kdef}, \eqref{eq:INCE} and the fact that $\underline{m}$ is supported in $P_1$ we have 
\begin{equation}\label{eq:AAA3}
\Div \left(\left(\rho e(\rho) + p(\rho) + \frac{C}{2}\right) \frac{\underline{m}}{\rho}\right) = \Div \left(\left(\rho_1e(\rho_1) + p(\rho_1) + \frac{C_1}{2}\right) \frac{\underline{m}}{\rho_1}\right) = K(t)\Div \underline{m} = 0.
\end{equation}
Therefore it is enough to sum \eqref{eq:SSIN} with \eqref{eq:AAA3} to obtain \eqref{eq:EEin}.

The proof of Proposition \ref{p:main} is concluded by observing that as $\tau \rightarrow 0+$ the Lebesgue measure of $P_1 \cap \{t = \tau\}$ converges to zero and therefore the attainment of the initial conditions follows from \eqref{eq:SSIC}.
\end{proof}

\section{The existence of a subsolution}\label{s:main}

In this section we study further the admissible generalized fan subsolution which we will look for.

\subsection{Basic outline}

We start with examining some properties of solution to the Burgers equation starting from the initial data of the form we are interested in. More precisely, following the discussion in Section \ref{s:cw} and in particular the form of the solution to the Burgers equation at a time instant when the smooth compression wave collapses \eqref{eq:BursmoothCWT}, we consider now $\lambda^0_1$ such that it is smooth on intervals $(-\infty, 0)$ and $(0,\infty)$ and 
\begin{equation}\label{eq:BurICGR}
\lambda_1^0(x_2) = \left\{\begin{split}
&\lambda_- \qquad \text{ for } x_2 < - \zeta_1 \\
&\widetilde{f}_-(x_2) \qquad \text{ for } - \zeta_1 < x_2 < 0 \\
&\widetilde{f}_+(x_2) \qquad \text{ for } 0 < x_2 <  \zeta_1 \\
&\lambda_+ \qquad \text{ for } x_2 >  \zeta_1
\end{split} \right.
\end{equation}
where $\widetilde{f}_\pm$ are smooth functions such that for $|x_2| < \overline{\zeta}$ with sufficiently small $\overline{\zeta}$ it holds
\begin{equation}\label{eq:Propertiesfpm}
    \begin{split}
        \widetilde{f}_-(x_2) &= a_- f_0(h_-(x_2)) + b_-   \\
 \widetilde{f}_+(x_2) &= -a_+f_0(h_+(x_2)) + b_+.
    \end{split}
\end{equation}
with some $a_\pm > 0$ and $b_\pm = \lambda_\pm \pm \frac{\zeta_2}{T}$, where $f_0$ is given by \eqref{eq:f0def} and $h_\pm$ satisfy the assumptions of Lemma \ref{l:f0h}.


\begin{lemma}\label{l:41}
Let $\lambda^0_1$ has the form \eqref{eq:BurICGR} such that \eqref{eq:Propertiesfpm} holds. Then there exist functions $s_\pm(t)$ such that $s_-(0) = s_+(0) = 0$, $s_-(t) < s_+(t)$ for all $t > 0$ and the admissible solution $\lambda_1(t,x_2)$ to the Burgers equation \eqref{eq:Bur}-\eqref{eq:BurIC} is $C^\infty$ on the sets $x_2 < s_-(t)$ and $x_2 > s_+(t)$.
\end{lemma}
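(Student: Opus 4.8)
The plan is to analyze the solution to the Burgers equation \eqref{eq:Bur}-\eqref{eq:BurIC} by the method of characteristics, tracking separately the three relevant pieces of the initial profile \eqref{eq:BurICGR}: the two constant tails $x_2 < -\zeta_1$ (value $\lambda_-$) and $x_2 > \zeta_1$ (value $\lambda_+$), and the central region $|x_2| < \zeta_1$ where $\lambda_1^0 = \widetilde f_\pm$. The key structural fact is that $\widetilde f_-$ is monotone with $\widetilde f_-(0^-) = b_- = \lambda_- + \tfrac{\zeta_2}{T}$ and $\widetilde f_-(-\zeta_1) $ matching the tail value (up to the smooth connecting interval), and similarly $\widetilde f_+(0^+) = b_+ = \lambda_+ - \tfrac{\zeta_2}{T} < b_-$; in particular, at $x_2 = 0$ the initial datum has a jump downward in value as one crosses from left to right (since $\lambda_- > \lambda_+$ and $\tfrac{\zeta_2}{T}$ is small). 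Because the larger value sits to the left, the characteristics emanating from the left piece will eventually overtake those from the right and a shock forms at $x_2 = 0$ at some time; this shock curve is exactly the candidate for the interface between $s_-(t)$ and $s_+(t)$.

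The steps I would carry out are as follows. First, I define $s_-(t)$ and $s_+(t)$ to be the boundaries of the region occupied at time $t$ by characteristics originating from the "bad" part of the initial data near $x_2 = 0$ — more precisely, $s_-(t)$ is the position at time $t$ of the characteristic issuing from the left endpoint where $\widetilde f_-$ ceases to be controlled by Lemma \ref{l:f0h}, i.e. from $x_2 = -\overline\zeta$ (analogously $s_+(t)$ from $x_2 = +\overline\zeta$, or from wherever the shock absorbs them), with $s_\pm(0)=0$ guaranteed by construction and $s_-(t) < s_+(t)$ for $t>0$ since the two characteristics start with a gap and the relevant characteristic speeds keep them ordered until possible collision, after which one re-defines $s_\pm$ along the shock. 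Second, for $x_2 < s_-(t)$ I must show $\lambda_1$ is $C^\infty$: this region is filled by characteristics coming from $x_2 < -\overline\zeta$, where $\lambda_1^0$ is either constant or equal to $a_- f_0(h_-(x_2)) + b_-$ with all derivatives controlled. Here I invoke precisely Lemma \ref{l:f0h} (and Lemma \ref{l:f0}) — the whole point of the $f_0$, $h_\pm$ construction was to ensure that the map $x_2 \mapsto \lambda_1^0(x_2)$ on $(-\overline\zeta, 0)$ produces, under the characteristic flow, a solution that extends $C^\infty$ across the linear profile $\tfrac{x_2}{T-t}$ region; I need to check that characteristics from $(-\overline\zeta,0)$ do not cross each other for $t < T$ (they cross only at $t=T$, as in Section \ref{s:cw}) so the inverse function $g(t,x_2)$ is well-defined and smooth, and then $\lambda_1 = \lambda_1^0 \circ g$ inherits smoothness, with the smooth matching at the interfaces $-\lambda_\pm(T-t) \pm \zeta_2\tfrac{T-t}{T}$ handled exactly as in \eqref{eq:BursmoothCW}. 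Third, the symmetric argument gives $C^\infty$ regularity on $x_2 > s_+(t)$.

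The main obstacle I anticipate is the bookkeeping at the interfaces and after shock formation. Before the shock forms, $s_-(t)$ and $s_+(t)$ are just two smooth characteristic curves and the region between them contains the self-similar part $\tfrac{x_2}{T-t}$ plus the two smooth connecting pieces $f_\pm(t,\cdot)$; here everything is as in Section \ref{s:cw} and Lemmas \ref{l:f0}–\ref{l:f0h} apply verbatim after a translation, so regularity outside is immediate. Once characteristics begin to collide (which happens precisely at time $T$ for the inner linear profile, but possibly earlier or later for the $f_\pm$-pieces depending on the monotonicity of the derivatives), one must redefine $s_\pm(t)$ along the Rankine–Hugoniot shock curve; since $\lambda_1$ is constant on the outer tails and the shock has the two tail values $\lambda_-, \lambda_+$ as one-sided limits for $t$ large, the shock speed is $\tfrac{\lambda_-+\lambda_+}{2}$ eventually and the exterior solution is literally constant there, hence trivially smooth. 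The genuinely delicate check is that for all $t \in (0,T]$ no characteristic from the exterior region $\{x_2 < -\overline\zeta\} \cup \{x_2 > \overline\zeta\}$ enters the strip $(s_-(t), s_+(t))$ prematurely and that $s_-(t) < s_+(t)$ persists — this follows from the monotonicity of $\lambda_1^0$ on each of the two pieces (which, with the strictly monotone first derivatives assumed in \eqref{eq:BursmoothCWT} and in Lemma \ref{l:f0h}, prevents characteristics within each piece from crossing before $t = T$) together with the ordering $\widetilde f_-(x_2) > \widetilde f_+(y_2)$ for $x_2 < 0 < y_2$ near the origin, which keeps the left family strictly ahead of the right family and thus $s_- < s_+$.
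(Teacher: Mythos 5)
There are genuine gaps here, both in your choice of the curves $s_\pm$ and in the regularity argument. First, your curves do not satisfy the requirements of the lemma: the characteristics issuing from $x_2=-\overline{\zeta}$ and $x_2=+\overline{\zeta}$ start at those points, so with your definition $s_-(0)=-\overline{\zeta}$ and $s_+(0)=+\overline{\zeta}$, not $0$; the assertion that $s_\pm(0)=0$ holds ``by construction'' is false. Moreover the left characteristic travels with speed close to $\lambda_-$ and the right one with speed close to $\lambda_+<\lambda_-$, so they approach each other and collide (indeed each is absorbed by the shock in finite time), and redefining both along the shock afterwards destroys the strict inequality $s_-(t)<s_+(t)$ required for all $t>0$. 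The paper's proof avoids tracking any characteristic: since the one-sided traces of the solution along the shock always lie within $\zeta_2/T$ of $\lambda_-$, resp. $\lambda_+$, the Rankine--Hugoniot relation $s=\frac12(\lambda_{1L}+\lambda_{1R})$ confines the shock speed to $\left(\frac12(\lambda_-+\lambda_+-\frac{\zeta_2}{T}),\frac12(\lambda_-+\lambda_++\frac{\zeta_2}{T})\right)$, and one simply takes the straight lines $s_\pm(t)=\frac12(\lambda_-+\lambda_+\pm\frac{\zeta_2}{T})\,t$, which pass through the origin, are strictly ordered, and trap the shock for all times.

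Second, the smoothness part of your plan misreads the setting of the lemma. The datum \eqref{eq:BurICGR} is the already collapsed profile: it has a jump at $x_2=0$, the admissible solution contains a shock from $t=0^+$ on, and in this forward problem there is no linear region $\frac{x_2}{T-t}$ and no distinguished time $T$; appeals to ``characteristics crossing only at $t=T$'', to the matching in \eqref{eq:BursmoothCW}, and to Lemmas \ref{l:f0}--\ref{l:f0h} (which concern producing such a profile from smooth data, i.e. the evolution \emph{before} the collapse) are therefore out of place. More importantly, the claim that monotonicity of $\widetilde{f}_\pm$ prevents characteristics within each piece from crossing is wrong in the direction that matters: by \eqref{eq:Propertiesfpm} one has $\widetilde{f}_-'(x_2)=a_-f_0'(h_-(x_2))\,h_-'(x_2)<0$ near the origin (and similarly for $\widetilde{f}_+$), with slope tending to $-\infty$ as $x_2\to 0$; both pieces are compressive, so their characteristics do focus, in arbitrarily short time near the origin, and the data on $(-\zeta_1,-\overline{\zeta})$ is likewise decreasing, so restricting attention to characteristics from $x_2<-\overline{\zeta}$ does not remove the issue. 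What actually has to be checked --- and what your plan does not secure --- is that such crossings can occur only after the relevant characteristics have been absorbed by the shock, i.e. only inside the strip bounded by the lines $s_\pm$ above; this uses quantitatively that the oscillation of each side piece is at most $\zeta_2/T$, small compared with $\lambda_--\lambda_+$, and that $|\widetilde{f}_\pm'(x_2)|=o(1/|x_2|)$ as $x_2\to 0$ for profiles built from $f_0$. Outside that strip the solution is then given by non-crossing characteristics of the smooth one-sided data, which is precisely the conclusion of the lemma.
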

\begin{proof}
Since $\lambda_- > \lambda_+$, the admissible solution to the Burgers equation contains a shock. The shock curve satisfies the Rankine-Hugoniot conditions
\begin{equation}
s(\lambda_{1R} - \lambda_{1L}) = \frac{1}{2}(\lambda_{1R}^2-\lambda_{1L}^2)
\end{equation}
yielding
\begin{equation}
s = \frac{1}{2} (\lambda_{1R}+\lambda_{1L}),
\end{equation}
where $s$ is the shock speed and $\lambda_{1R}, \lambda_{1L}$ are values on the right and on the left side of the shock respectively. Even though the precise position of the shock curve in spacetime is obviously related to the functions $\widetilde{f}_\pm$, it follows by the continuity of the related quantities 
that the shock speed will always belong to the interval
\begin{equation}
s \in \left(\frac 12 (\lambda_++\lambda_- - \frac{\zeta_2}{T}), \frac 12 (\lambda_++\lambda_- + \frac{\zeta_2}{T})\right)
\end{equation}
and therefore it is enough to set
\begin{equation}
s_\pm(t) = \frac 12 (\lambda_++\lambda_- \pm \frac{\zeta_2}{T})t.
\end{equation}
\end{proof}

Our goal is to find an admissible generalized fan subsolution to the compressible Euler equations with the initial data generated by a smooth compression wave. We will search for a subsolution with the property that $\rho_1,m_1,u_1$ and $C_1$ do not depend on $x_1$ and are therefore functions only of $t$ and $x_2$. In particular we have to make the following steps
\begin{itemize}
\item[(i)] Find functions $\widetilde{\nu}_-(t)$, $\widetilde{\nu}_+(t)$ describing the generalized fan partition.
\item[(ii)] Construct solutions to the Euler equations in the regions $P_-$, $P_+$ with the initial data \eqref{eq:ICGR}-\eqref{eq:ICGRProp} generated by a smooth compression wave. These $C^\infty$ solutions will be constructed by the method of characteristics for the Burgers equation \eqref{eq:Bur} for $\lambda_1$ in regions $P_-$ and $P_+$ while keeping $w_1$ constant. This will be possible provided $\widetilde{\nu}_- < s_-$ and $\widetilde{\nu}_+ > s_+$ for all $t > 0$.
\item[(iii)] In order to the equations \eqref{eq:SSCE}-\eqref{eq:SSME} and the inequality \eqref{eq:SSIN} to be satisfied on the discontinuities given by $x_2 = \widetilde{\nu}_\pm(t)$, the appropriate Rankine-Hugoniot conditions need to be satisfied on these two interfaces.
\item[(iv)] We need to ensure that in the region $P_1$ the equations \eqref{eq:SSCE}-\eqref{eq:SSME} are satisfied, the inequalities \eqref{eq:SSCond}, \eqref{eq:SSIN} are satisfied and the condition \eqref{eq:Kdef} holds.
\end{itemize}

In the following we will investigate the points above. The actual construction is to some extent reverse; by first defining $\rho^0,m^0$ close to a certain piecewise constant Riemann state and then deducing from these functions the fan partition. This construction can be found at the end of the paper.

\subsection{A general set of conditions}

We set 
\begin{align}
m_1 &= (\alpha,\beta) \\
u_1 &= \left(
\begin{array}{cc}
\gamma_1 & \gamma_2 \\
\gamma_2 & - \gamma_1
\end{array}\right) \\
\nu_\pm(t) &= \frac{\d}{\d t} \widetilde{\nu}_\pm(t)
\end{align}
and we introduce the following notation
\begin{align}
(\rho_\pm^\nu, m_{\pm 2}^\nu)(t) &:= (\rho_\pm,m_{\pm 2})(t,\widetilde{\nu}_\pm(t)) \\
f_L(t) &:= f(t,\widetilde{\nu}_-(t)) \\
f_R(t) &:= f(t,\widetilde{\nu}_+(t))
\end{align}
for any function $f$ defined in $P_1$.

Let us now write down the set of Rankine-Hugoniot conditions described in the point (iii) above. For each time $t > 0$ we have
\begin{itemize}
\item Rankine-Hugoniot conditions on the left interface:
\begin{align}
&\nu_- (\rho_-^\nu - \rho_{1L}) \, =\,  m_{-2}^\nu - \beta_L \label{eq:cont_left}  \\
&\nu_- (m_{-1}^\nu - \alpha_L) \, = \, \frac{m_{-1}^\nu m_{-2}^\nu}{\rho_-^\nu} - \gamma_{2L}  \label{eq:mom_1_left}\\
&\nu_- (m_{-2}^\nu - \beta_L) \, = \,  
\frac{(m_{-2}^\nu)^2}{\rho_-^\nu}  + \gamma_{1L} + p (\rho_-^\nu)-p (\rho_{1L}) - \frac{C_{1L}}{2}\, ;\label{eq:mom_2_left}
\end{align}
\item Rankine-Hugoniot conditions on the right interface:
\begin{align}
&\nu_+ (\rho_{1R}-\rho_+^\nu ) \, =\,  \beta_R - m_{+2}^\nu \label{eq:cont_right}\\
&\nu_+ (\alpha_R - m_{+1}^\nu) \, = \, \gamma_{2R} - \frac{m_{+1}^\nu m_{+2}^\nu}{\rho_+^\nu} \label{eq:mom_1_right}\\
&\nu_+ (\beta_R - m_{+2}^\nu) \, = \, - \gamma_{1R} - \frac{(m_{+2}^\nu)^2}{\rho_+^\nu}  +p (\rho_{1R}) -p (\rho_+^\nu) 
+ \frac{C_{1R}}{2}\, ;\label{eq:mom_2_right}
\end{align}
\item admissibility condition on the left interface:
\begin{align}
& \nu_-(\rho_-^\nu e(\rho_-^\nu)- \rho_{1L} e( \rho_{1L}))+\nu_- 
\left(\frac{\abs{m_-^\nu}^2}{2\rho_-^\nu}- \frac{C_{1L}}{2}\right)\nonumber\\
\leq & \left[(\rho_-^\nu e(\rho_-^\nu)+ p(\rho_-^\nu)) \frac{m_{-2}^\nu}{\rho_-^\nu}- 
( \rho_{1L} e( \rho_{1L})+ p(\rho_{1L}))\frac{\beta_L}{\rho_{1L}} \right] 
+ \left( m_{-2}^\nu \frac{\abs{m_-^\nu}^2}{2(\rho_-^\nu)^2}- \beta_L \frac{C_{1L}}{2\rho_{1L}}\right)\, ;\label{eq:E_left}
\end{align}
\item admissibility condition on the right interface:
\begin{align}
&\nu_+(\rho_{1R} e( \rho_{1R})- \rho_+^\nu e(\rho_+^\nu))+\nu_+ 
\left( \frac{C_{1R}}{2}- \frac{\abs{m_+^\nu}^2}{2\rho_+^\nu}\right)\nonumber\\
\leq &\left[ ( \rho_{1R} e( \rho_{1R})+ p(\rho_{1R})) \frac{\beta_R}{\rho_{1R}}- (\rho_+^\nu e(\rho_+^\nu)+ p(\rho_+^\nu)) \frac{m_{+2}^\nu}{\rho_+^\nu}\right] 
+ \left( \beta_R \frac{C_{1R}}{2\rho_{1R}}- m_{+2}^\nu \frac{\abs{m_+^\nu}^2}{2(\rho_+^\nu)^2}\right)\, .\label{eq:E_right}
\end{align}
\end{itemize}

Finally we write down the conditions mentioned in the point (iv) above. We recall that we search for subsolution independent of $x_1$, therefore we have from \eqref{eq:SSCE}-\eqref{eq:SSME}
\begin{align}
\pat \rho_1 + \paxx \beta &= 0 \label{eq:SSCE2} \\
\pat \alpha + \paxx \gamma_2 &= 0 \label{eq:SSME20} \\
\pat \beta + \paxx \left( p(\rho_1) + \frac{C_1}{2} - \gamma_1 \right) &= 0,\label{eq:SSME2}
\end{align}
the subsolution condition \eqref{eq:SSCond} transforms to
\begin{align}
 &\alpha^2 +\beta^2 < \rho_1 C_1 \label{eq:sub_trace}\\
& \left( \frac{C_1}{2} -\frac{\alpha^2}{\rho_1} +\gamma_1 \right) \left( \frac{C_1}{2} -\frac{\beta^2}{\rho_1} -\gamma_1 \right) - 
\left( \gamma_2 - \frac{\alpha \beta}{\rho_1} \right)^2 >0,\label{eq:sub_det}
\end{align}
we rewrite \eqref{eq:Kdef} as
\begin{equation}\label{eq:Kdef2}
\left(\rho_1 e(\rho_1) + p(\rho_1) + \frac{C_1}{2}\right)(t,x_2) = \rho_1(t,x_2) K(t)
\end{equation}
and we can formulate \eqref{eq:SSIN} as 
\begin{equation}\label{eq:SSIN2}
\pat \left(\rho_1 e(\rho_1) + \frac{C_1}{2}\right) + \paxx \left(\left(\rho_1 e(\rho_1) + p(\rho_1) + \frac{C_1}{2}\right)\frac{\beta}{\rho_1}\right) \leq 0.
\end{equation}

\subsection{Simplifications and ansatz}

We continue with several observations which were already introduced in previous works, for more details see \cite[Section 4]{ChiKre14}. First, our initial data $(\rho^0,m^0)$ will be chosen in such a way that $m_{-1}^0 = v_{-1}^0 = m_{+1}^0 = v_{+1}^0 \equiv 0$. This directly implies $m_{-1} = m_{+1} \equiv 0$ and we will therefore look for subsolution with similar property, namely $\alpha \equiv 0$ and $\gamma_2 \equiv 0$. This choice implies that \eqref{eq:mom_1_left}, \eqref{eq:mom_1_right} and \eqref{eq:SSME20} are trivially satisfied and \eqref{eq:sub_trace}-\eqref{eq:sub_det} simplify to
\begin{align}
 &\beta^2 < \rho_1 C_1 \label{eq:sub_trace2}\\
& \left( \frac{C_1}{2} +\gamma_1 \right) \left( \frac{C_1}{2} -\frac{\beta^2}{\rho_1} -\gamma_1 \right) >0.\label{eq:sub_det2}
\end{align}
The necessary condition for \eqref{eq:sub_trace2}-\eqref{eq:sub_det2} to be satisfied is $\frac{C_1}{2} - \gamma_1 > \frac{\beta^2}{\rho_1}$ (see \cite[Lemma 4.3]{ChiKre14}), which motivates us to introduce 
\begin{align}\label{eq:ep1def}
\ep_1 &:= \frac{C_1}{2} - \gamma_1 - \frac{\beta^2}{\rho_1}\\
\ep_2 &:= C_1 - \frac{\beta^2}{\rho_1} - \ep_1\label{eq:ep2def}
\end{align}
and \eqref{eq:sub_trace2}-\eqref{eq:sub_det2} further simplify to
\begin{align}
 &\ep_1 > 0 \label{eq:sub_trace3}\\
&\ep_2 > 0.\label{eq:sub_det3}
\end{align}

Now we rewrite the remaining equations and inequalities \eqref{eq:cont_left}-\eqref{eq:SSIN2} using $\ep_1$, $\ep_2$ instead of $C_1$ and $\gamma_1$ and plugging in also our choice of the pressure law $p(\rho) = \rho^2$, which yields $e(\rho) = \rho$. We get 
\begin{itemize}
\item Rankine-Hugoniot conditions on the left interface:
\begin{align}
&\nu_- (\rho_-^\nu - \rho_{1L}) \, =\,  m_{-2}^\nu - \beta_L \label{eq:cont_left2}  \\
&\nu_- (m_{-2}^\nu - \beta_L) \, = \,  
\frac{(m_{-2}^\nu)^2}{\rho_-^\nu} + (\rho_-^\nu)^2 - \frac{\beta_L^2}{\rho_{1L}} - \rho_{1L}^2 - \ep_{1L}\, ;\label{eq:mom_2_left2}
\end{align}
\item Rankine-Hugoniot conditions on the right interface:
\begin{align}
&\nu_+ (\rho_{1R}-\rho_+^\nu ) \, =\,  \beta_R - m_{+2}^\nu \label{eq:cont_right2}\\
&\nu_+ (\beta_R - m_{+2}^\nu) \, = \,  \frac{\beta_R^2}{\rho_{1R}} + \rho_{1R}^2 + \ep_{1R}
- \frac{(m_{+2}^\nu)^2}{\rho_+^\nu}  - (\rho_+^\nu)^2 \, ;\label{eq:mom_2_right2}
\end{align}
\item admissibility condition on the left interface:
\begin{align}
& \nu_-((\rho_-^\nu)^2 - \rho_{1L}^2 )+\nu_- 
\left(\frac{(m_{-2}^\nu)^2}{2\rho_-^\nu}- \frac{\beta_L^2}{2\rho_{1L}} - \frac{\ep_{1L}+\ep_{2L}}{2}\right)\nonumber\\
\leq & \left(2\rho_-^\nu m_{-2}^\nu - 
2\rho_{1L}\beta_L \right) 
+ \left( \frac{(m_{-2}^\nu)^3}{2(\rho_-^\nu)^2}- \frac{\beta_{L}^3}{2\rho_{1L}^2} - \frac{\beta_L(\ep_{1L}+\ep_{2L})}{2\rho_{1L}}\right)\, ;\label{eq:E_left2}
\end{align}
\item admissibility condition on the right interface:
\begin{align}
&\nu_+(\rho_{1R}^2- (\rho_+^\nu)^2)+\nu_+ 
\left( \frac{\beta_R^2}{2\rho_{1R}} + \frac{\ep_{1R}+\ep_{2R}}{2}- \frac{(m_{+R}^\nu)^2}{2\rho_+^\nu}\right)\nonumber\\
\leq &\left( 2\rho_{1R}\beta_R - 2\rho_+^\nu m_{+2}^\nu\right) 
+ \left( \frac{\beta_{R}^3}{2\rho_{1R}^2} + \frac{\beta_R(\ep_{1R}+\ep_{2R})}{2\rho_{1R}} - \frac{(m_{+2}^\nu)^3}{2(\rho_+^\nu)^2}\right)\, ;\label{eq:E_right2}
\end{align}
\item differential equations in $P_1$:
\begin{align}
\pat \rho_1 + \paxx \beta &= 0 \label{eq:SSCE3} \\
\pat \beta + \paxx \left( \rho_1^2 + \frac{\beta^2}{\rho_1} + \ep_1 \right) &= 0\, ;\label{eq:SSME3}
\end{align}
\item admissibility in $P_1$:
\begin{align}\label{eq:Kdef3}
\left(2\rho_1^2 + \frac{1}{2}\left(\frac{\beta^2}{\rho_1}+\ep_1+\ep_2\right)\right)(t,x_2) &= \rho_1(t,x_2) K(t) \\
\label{eq:SSIN3}
\pat \left(\rho_1^2 + \frac{1}{2}\left(\frac{\beta^2}{\rho_1}+\ep_1+\ep_2\right)\right) + K(t)\paxx \beta &\leq 0\, .
\end{align}
\end{itemize}
Using \eqref{eq:Kdef3} we moreover rewrite \eqref{eq:SSME3} and \eqref{eq:SSIN3} to
\begin{align}
\pat \beta + \paxx \left( 2\rho_1 K - 3\rho_1^2 - \ep_2 \right) &= 0\, ;\label{eq:SSME4} \\
\pat \left(\rho_1 K - \rho_1^2 \right) + K(t)\paxx \beta &\leq 0\, .\label{eq:SSIN4}
\end{align}

At this point we introduce the following ansatz for the part of the subsolution supported in the region $P_1$. We will look for the admissible generalized fan subsolution having the following properties.
\begin{itemize}
\item $\rho_1$ is constant in the whole $P_1$;
\item $K$ is constant in the whole $P_1$;
\item $\beta$ depends only on $t$ and is independent of $x_2$.
\end{itemize}
With this ansatz, \eqref{eq:SSCE3} is satisfied trivially, \eqref{eq:SSIN4} is satisfied trivially as an equation and \eqref{eq:SSME4} simplifies into
\begin{equation}
\pat \beta - \paxx \ep_2 = 0\, .\label{eq:SSME5}
\end{equation}
Since we assume that $\beta$ is independent of $x_2$, the same has to hold also for $\paxx \ep_2$, which implies that $\ep_2$ has to be linear in the $x_2$-variable and \eqref{eq:SSME5} therefore becomes
\begin{equation}
\pat \beta = \frac{\ep_{2R}-\ep_{2L}}{l},\label{eq:SSME6}
\end{equation}
where
\begin{equation}\label{eq:ldef}
l = l(t) = \widetilde{\nu}_+(t) - \widetilde{\nu}_-(t) = \int_0^t (\nu_+(t) - \nu_-(t)) \dt > 0.
\end{equation}

Moreover we rewrite \eqref{eq:cont_left2}-\eqref{eq:E_right2} once again, this time using \eqref{eq:Kdef3} to replace $\ep_1$ by $K$. Note that as a consequence of our ansatz, $\ep_2$ remains in this set of equaitons and inequalities the only function defined in $P_1$ with different values of $\ep_{2L}$ and $\ep_{2R}$. In all other functions we can skip the subscripts $L$ and $R$, since these functions are constant for fixed $t$. We obtain
\begin{itemize}
\item Rankine-Hugoniot conditions on the left interface:
\begin{align}
&\nu_- (\rho_-^\nu - \rho_{1}) \, =\,  m_{-2}^\nu - \beta \label{eq:cont_left3}  \\
&\nu_- (m_{-2}^\nu - \beta) \, = \,  
\frac{(m_{-2}^\nu)^2}{\rho_-^\nu} + (\rho_-^\nu)^2 - 2\rho_1 K + 3\rho_1^2 + \ep_{2L}\, ;\label{eq:mom_2_left3}
\end{align}
\item Rankine-Hugoniot conditions on the right interface:
\begin{align}
&\nu_+ (\rho_{1}-\rho_+^\nu ) \, =\,  \beta - m_{+2}^\nu \label{eq:cont_right3}\\
&\nu_+ (\beta - m_{+2}^\nu) \, = \,  2\rho_1 K - 3\rho_{1}^2 - \ep_{2R}
- \frac{(m_{+2}^\nu)^2}{\rho_+^\nu}  - (\rho_+^\nu)^2 \, ;\label{eq:mom_2_right3}
\end{align}
\item Admissibility condition on the left interface:
\begin{equation}
\nu_- \left((\rho_-^\nu)^2 + \rho_{1}^2 + \frac{(m_{-2}^\nu)^2}{2\rho_-^\nu}- \rho_1 K\right)\leq 2\rho_-^\nu m_{-2}^\nu +  \frac{(m_{-2}^\nu)^3}{2(\rho_-^\nu)^2}- \beta K\, ;\label{eq:E_left3}
\end{equation}
\item Admissibility condition on the right interface:
\begin{equation}
\nu_+ \left(\rho_1 K - \rho_1^2 - (\rho_+^\nu)^2 - \frac{(m_{+R}^\nu)^2}{2\rho_+^\nu}\right)
\leq \beta K - 2\rho_+^\nu m_{+2}^\nu - \frac{(m_{+2}^\nu)^3}{2(\rho_+^\nu)^2}\, .\label{eq:E_right3}
\end{equation}
\end{itemize}

\subsection{Solution for Riemann initial data}\label{ss:Riem}

As it was already shown in \cite{ChiDelKre15}, there exist infinitely many admissible weak solutions starting from Riemann initial data generated by a compression wave. We will use the same result as a starting point, however we present a proof of existence of little bit different solutions than those proved to exist in \cite{ChiDelKre15}. Here we assume that the initial data have the form
\begin{equation}\label{eq:ICRiemann}
(\rho^0,v^0) = \left\{\begin{split}
&(R_-,V_-) \qquad \text{ for } x_2 < 0 \\
&(R_+,V_+) \qquad \text{ for } x_2 > 0.
\end{split} \right.
\end{equation}

\begin{lemma}\label{l:Riemann}
Let $p(\rho) = \rho^2$. Let $(\rho^0,v^0)$ have the form \eqref{eq:ICRiemann} with $R_- = 1$, $R_+ = 4$, $V_- = (0,\sqrt{8})$, $V_+ = (0,0)$. Then there exist infinitely many bounded admissible weak solutions to the Euler equations \eqref{eq:EulerCE}-\eqref{eq:EulerIC}.
\end{lemma}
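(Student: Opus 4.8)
The plan is to reduce the statement, via Proposition~\ref{p:main}, to the construction of a single admissible generalized fan subsolution to \eqref{eq:EEm1}--\eqref{eq:EEm2} attaining the initial data \eqref{eq:ICRiemann}. Since the data are piecewise constant, I would let the outer regions $P_-$ and $P_+$ carry the constant states $(R_-,V_-)$ and $(R_+,V_+)$ themselves — these solve the Euler system trivially and produce the required $u_\pm$, $C_\pm$ — and take the interfaces straight, $\widetilde\nu_\pm(t)=\nu_\pm t$ with constants $\nu_-<\nu_+$. Before doing so I would record why exactly these data are the relevant target: for $p(\rho)=\rho^2$ one computes $w_1=v_2+2\sqrt2\,\sqrt\rho$ and $\lambda_1=v_2-\sqrt{2\rho}$, and for $(R_\pm,V_\pm)$ as in the statement $w_1\equiv 4\sqrt2$ on both half-lines while $\lambda_1$ takes the values $\lambda_-=\sqrt2$ for $x_2<0$ and $\lambda_+=-2\sqrt2$ for $x_2>0$; i.e.\ this is precisely the state at which a smooth $1$-compression wave collapses, cf.\ \eqref{eq:BursmoothCWT}, with $\lambda_->\lambda_+$. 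This is what makes these data compatible with the gluing performed in Section~\ref{s:main}.

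In the region $P_1$ I would use the fully self-similar, piecewise constant ansatz of \cite{ChiDelKre15,ChiKre14}: $\rho_1,\beta,\ep_1,\ep_2,K$ all constant and $\alpha=\gamma_2\equiv0$ (legitimate since $v_{\pm1}^0\equiv0$). With this choice the differential constraints in $P_1$ are automatic — \eqref{eq:SSCE3} holds trivially, \eqref{eq:SSME5} just forces $\ep_2$ to be constant in $x_2$ (consistent with the ansatz, so that \eqref{eq:SSME6} reads $0=0$), and \eqref{eq:SSIN4} holds with equality — so that everything reduces to a purely \emph{algebraic} problem for the seven constants $(\rho_1,\beta,\ep_1,\ep_2,K,\nu_-,\nu_+)$: the four Rankine--Hugoniot equalities \eqref{eq:cont_left3}--\eqref{eq:mom_2_right3} together with the relation \eqref{eq:Kdef3} (five scalar equations), supplemented by the \emph{open} conditions $\rho_1>0$, $\ep_1>0$, $\ep_2>0$ (equivalently the subsolution inequalities \eqref{eq:sub_trace3}--\eqref{eq:sub_det3}), the interface admissibility inequalities \eqref{eq:E_left3}--\eqref{eq:E_right3}, and $\nu_-<\nu_+$.

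Since there are seven unknowns and five equations, I expect a two-parameter family of solutions of the equalities, and the goal is to find one lying in the open feasible region. Concretely I would solve the five equalities for five of the constants in terms of the remaining two (for instance: \eqref{eq:Kdef3} gives $K$ in terms of $\rho_1,\beta,\ep_1,\ep_2$, the continuity relations \eqref{eq:cont_left3}, \eqref{eq:cont_right3} give $\nu_\pm$ in terms of $\rho_1,\beta$, and the two momentum relations \eqref{eq:mom_2_left3}, \eqref{eq:mom_2_right3} then determine $\rho_1,\beta$ as functions of $\ep_1,\ep_2$), and then verify that for the specific numbers $R_-=1$, $R_+=4$, $V_-=(0,\sqrt8)$, $V_+=(0,0)$ the constraints $\ep_1,\ep_2>0$, \eqref{eq:E_left3}--\eqref{eq:E_right3} and $\nu_-<\nu_+$ carve out a nonempty open set — this is precisely the kind of analysis carried out in \cite{ChiKre14,ChiDelKre15}, and the numbers are chosen so that it applies (and so that $\lambda_\pm=\sqrt2,-2\sqrt2$ match Section~\ref{s:cw}). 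Any element of this set gives an admissible generalized fan subsolution, and Lemma~\ref{l:CI} through Proposition~\ref{p:main} then upgrades it to infinitely many bounded admissible weak solutions with $\rho=\overline\rho$.

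The main obstacle is this last feasibility check: the interface admissibility inequalities \eqref{eq:E_left3}--\eqref{eq:E_right3} (which carry cubic terms in the momenta) together with $\ep_1,\ep_2>0$ have to hold \emph{strictly} at a common point, and one must confirm that the algebraic system does not degenerate there. This amounts to checking a finite system of polynomial inequalities in the parameters; the delicate point is that $R_\pm$ and $V_\pm$ have to be chosen so that this region is nonempty \emph{and} simultaneously $\lambda_\pm=\sqrt2,-2\sqrt2$, so that the resulting subsolution can later be matched, along the curves $s_\pm(t)$ of Lemma~\ref{l:41}, to the smooth compression wave. Since the statement advertises a slightly different family of solutions than the one in \cite{ChiDelKre15}, I would carry out this verification explicitly rather than merely invoke it.
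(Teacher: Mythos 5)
Your overall strategy is exactly the paper's: reduce via Proposition~\ref{p:main} to producing a single piecewise constant admissible fan subsolution with straight interfaces, constant $(\rho_1,\beta,\ep_1,\ep_2,K)$ and $\alpha=\gamma_2\equiv 0$, so that the differential constraints in $P_1$ trivialize and everything collapses to the algebraic system \eqref{eq:cont_left3}--\eqref{eq:mom_2_right3} together with \eqref{eq:Kdef3} and the open conditions \eqref{eq:sub_trace3}--\eqref{eq:sub_det3}, \eqref{eq:E_left3}--\eqref{eq:E_right3}, $\nu_-<\nu_+$. Your preliminary computation of $w_1\equiv 4\sqrt 2$, $\lambda_-=\sqrt2$, $\lambda_+=-2\sqrt2$ is correct and matches the role these data play later in Section~\ref{s:main}.

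There is, however, a genuine gap: the feasibility check that you explicitly defer (``I would carry out this verification explicitly'') is the entire nontrivial content of the lemma. Counting five equations against seven unknowns only suggests a two-parameter family of solutions of the \emph{equalities}; it gives no information about whether any member of that family satisfies the \emph{strict} inequalities $\ep_1>0$, $\ep_2>0$, \eqref{eq:E_left3}--\eqref{eq:E_right3} and $\nu_-<\nu_+$, and this is genuinely data-dependent --- for many Riemann states no admissible fan subsolution exists (this is precisely the dichotomy analyzed in \cite{ChiKre14}, \cite{ChiDelKre15}, where admissibility fails e.g.\ for data resolved by rarefactions), so nonemptiness of the feasible set cannot be taken for granted and must be established for these particular values of $(R_\pm,V_\pm)$. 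The paper closes this step by simply exhibiting a point: it fixes $\rho_1=2$, $\ep_2=1$ and gives $\nu_-=\frac{-\sqrt{8}-\sqrt{26}}{3}$, $\nu_+=\frac{\sqrt{26}-\sqrt{32}}{6}$, $\beta=\frac{\sqrt{32}-\sqrt{26}}{3}$, $\ep_1=\frac{50+16\sqrt{13}}{9}$, $K=\frac{58+2\sqrt{13}}{9}$, after which checking the four Rankine--Hugoniot equalities and the strict admissibility and subsolution inequalities is a finite explicit computation. To complete your argument you would need to do the analogous thing: either produce such an explicit admissible point of the two-parameter family, or give an actual proof (not a plausibility count) that the open feasible region is nonempty for $R_-=1$, $R_+=4$, $V_-=(0,\sqrt 8)$, $V_+=(0,0)$.
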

\begin{proof}
First, it is easy to observe, that such initial data are generated by a compression wave, see \cite[Lemma 6.1]{ChiDelKre15}. We will show that there exists a piecewise constant admissible fan subsolution which yields the existence of infinitely many bounded admissible weak solutions using Proposition \ref{p:main}.

Such subsolution is given by the following set of numbers
\begin{equation}\label{eq:subsolRiem}
\begin{array}{lll}\displaystyle
\nu_- = \frac{-\sqrt{8}-\sqrt{26}}{3} & \displaystyle \nu_+ = \frac{\sqrt{26}-\sqrt{32}}{6} & \\
\rho_1 = 2 & \displaystyle \beta = \frac{\sqrt{32}-\sqrt{26}}{3} & \\
\displaystyle \ep_1 = \frac{50+16\sqrt{13}}{9} & \ep_2 = 1 & \displaystyle K = \frac{58+2\sqrt{13}}{9}.
\end{array}
\end{equation}

It is a matter of computation to check that equations \eqref{eq:cont_left3}-\eqref{eq:mom_2_right3} are satisfied and inequalities \eqref{eq:E_left3}-\eqref{eq:E_right3} are satisfied as strict inequalities. The equation \eqref{eq:SSME6} is of course satisfied trivially since we work now with piecewise constant functions.
\end{proof}

\subsection{Generalization to nonconstant $\ep_2$}\label{ss:Gen}

Let us recall again that the initial data we consider at this point in the proof have the form \eqref{eq:ICGR}-\eqref{eq:ICGRProp} and are generated by a smooth compression wave which approximates the compression wave from Lemma \ref{l:Riemann}. This means that the Riemann invariant $w_1$ defined in \eqref{eq:RiemInv} is constant (the value of this constant is actually $w_1 = 4\sqrt{2}$) and the initial characteristic speed $\lambda_1^0(x_2)$ defined in \eqref{eq:CharSpeed} has the form \eqref{eq:BurICGR}-\eqref{eq:Propertiesfpm}.

In particular taking $\zeta_2 > 0$ small enough, we know that the values of $(\rho^0_\pm,m^0_\pm)$ are close to $(R_\pm,M_\pm)$\footnote{We denote $M_\pm = R_\pm V_\pm$.}. Therefore the same property holds also for $(\rho_\pm^\nu,m_\pm^\nu)$, since the values of $(\rho_\pm,m_\pm)$ are propagated along characteristics of the Burgers equation \eqref{eq:Bur}-\eqref{eq:BurIC}.

We proceed with the construction of the subsolution as follows. We fix the values
\begin{equation}\displaystyle
\rho_1 = 2 \qquad \qquad K = \frac{58+2\sqrt{13}}{9}.\label{eq:rKchoice}
\end{equation}
For fixed $t > 0$ we solve the equations \eqref{eq:cont_left3}-\eqref{eq:mom_2_right3} to obtain $\nu_\pm$, $\beta$ and $\ep_{2L}$ as a functions of the parameter 
\[
\epd := \ep_{2L}-\ep_{2R}.
\]
We denote
\begin{align}
R &= \rho_-^\nu - \rho_+^\nu \label{eq:Rdef} \\
\overline{R} &= R_- - R_+ = -3 \\
A &= m_{-2}^\nu - m_{+2}^\nu  \label{eq:Adef} \\
\overline{A} &= M_{-2} - M_{+2} = \sqrt{8} \\
H &= \frac{(m_{-2}^\nu)^2}{\rho_-^\nu} - \frac{(m_{+2}^\nu)^2}{\rho_+^\nu} + (\rho_-^\nu)^2 - (\rho_+^\nu)^2 \\
B &= A^2 - RH = \rho_-^\nu\rho_+^\nu\left(\frac{m_{-2}^\nu}{\rho_-^\nu} - \frac{m_{+2}^\nu}{\rho_+^\nu}\right)^2 - (\rho_-^\nu - \rho_+^\nu)((\rho_-^\nu)^2-(\rho_+^\nu)^2) \label{eq:Bdef}\\
\overline{B} &= R_-R_+\left(\frac{M_{-2}}{R_-} - \frac{M_{+2}}{R_+}\right)^2 - (R_- - R_+)(R_-^2-R_+^2) = -13.
\end{align}
and we emphasize that for the initial conditions under consideration $(R,A,B)$ are functions of $t$, they are independent of the parameter $\epd$ and they take values in the neighborhood of $(\overline{R},\overline{A},\overline{B})$, i.e. we know that
\begin{align}
\label{eq:closeness}
\begin{aligned}
R &\in (\overline{R} - \hat{\delta}, \overline{R} + \hat{\delta}) \\
A &\in (\overline{A} - \hat{\delta}, \overline{A} + \hat{\delta}) \\
B &\in (\overline{B} - \hat{\delta}, \overline{B} + \hat{\delta})
\end{aligned}
\end{align}
for some small $\hat{\delta} = \hat{\delta}(\zeta_2) > 0$.
We observe that $B - R\epd < 0$ provided $\abs{\epd} < \overline{\ep}$ and therefore we express the solutions to \eqref{eq:cont_left3}-\eqref{eq:mom_2_right3} as functions of the values $\rho_\pm^\nu, m_{\pm 2}^\nu$ and parameter $\epd$:
\begin{align}
\nu_- &= \frac{A}{R} + \frac{1}{R}\sqrt{(-B+R\epd)\frac{\rho_+^\nu - 2}{2 - \rho_-^\nu}} \label{eq:numep}\\
\nu_+ &= \frac{A}{R} - \frac{1}{R}\sqrt{(-B+R\epd)\frac{2 - \rho_-^\nu}{\rho_+^\nu - 2}} \label{eq:nupep}\\
\beta &= \frac{-m_{-2}^\nu(\rho_+^\nu -2)-m_{+2}^\nu(2-\rho_-^\nu)}{R} +\frac{1}{R}\sqrt{(-B+R\epd)(2 - \rho_-^\nu)(\rho_+^\nu - 2)} \label{eq:betaep}\\
\ep_{2L} &= \nu_-^2(\rho_-^\nu - 2) - (\rho_-^\nu)^2 - \frac{(m_{-2}^\nu)^2}{\rho_-^\nu} + \frac{124 + 8\sqrt{13}}{9}.\label{eq:ep2lep}
\end{align}
We also note that due to continuous dependence of various formulas on data the fact that the admissibility inequalities \eqref{eq:E_left3}-\eqref{eq:E_right3} and subsolution inequalities \eqref{eq:sub_trace3}-\eqref{eq:sub_det3} are satisfied as strict inequalities for values $(R_\pm, M_\pm)$ and $\epd = 0$ implies that these inequalities will be satisfied also for $(\rho_\pm^\nu, m_{\pm 2}^\nu)$ and $\epd > 0$ provided $\hat{\delta}$ and $\overline{\ep}$ are sufficiently small.

\subsection{Solution to \eqref{eq:SSME6}}\label{ss:ODE}

We have to construct the subsolution in such a way that the equation \eqref{eq:SSME6}, which can be written as
\begin{equation}
\frac{\d}{\d t} \beta(t) = -\frac{\epd(t)}{l(t)},\label{eq:ODE}
\end{equation}
is satisfied. In order to solve this equation we choose to emphasize the dependence of all functions on $\epd$, hence obtaining
\begin{equation}
\beta(t) = \overline{\beta}(t,\epd(t)) \qquad l(t) = \overline{l}(t,\epd(t)).
\end{equation}
To simplify notation, in the rest of this section we skip writing bars over $\beta$ and $l$ and treat them as functions of two variables, $t$ and $\epd$. In particular we have
\begin{equation}
\frac{\d}{\d t} \beta (t,\epd(t)) = \de_t \beta (t,\epd(t)) + \de_\ep \beta (t,\epd(t)) \frac{\d}{\d t} \epd(t)
\end{equation}
and the equation \eqref{eq:ODE} becomes
\begin{equation}\label{eq:ODE2}
\de_\ep \beta (t,\epd(t)) \frac{\d}{\d t} \epd(t) = -\frac{\epd(t)}{l(t,\epd(t))} - \de_t \beta (t,\epd(t)).
\end{equation}
We complement \eqref{eq:ODE2} with the natural initial condition
\begin{equation}\label{eq:ODE2IC}
\epd(0) = 0.
\end{equation}
We denote
\begin{align}
f(t,\epd(t)) &:= \de_\ep \beta (t,\epd(t)) \label{eq:fdef0}\\
g(t,\epd(t)) &:= \de_t \beta (t,\epd(t)). \label{eq:gdef0}
\end{align}

Note that the existence of solution to \eqref{eq:ODE2} cannot be solved by the Picard-Lindel\"of theorem because beside other things the function $g(t,\epd)$ develops a singularity at $t=0$. Below we provide an existence proof based on the Banach fixed point argument with a special emphasis put upon the possible singularities of $g(t,\epd)$.
For $T>0$ we consider the Banach space 
$$
X_T = \{h\in C[0,T], h(0) = 0\}
$$
with the norm $\|h\|_{X_T} = \sup_{t\in [0,T]} |f(t)|$. For $r>0$ we denote $\mathcal B_{X_T}(r) = \{h\in X_T,\ \|h\|_{X_T}\leq r\}$.

We consider an operator $F$ on $X_T$ defined as $F(\dd) = \epd$ where $\epd$ is a solution to
\begin{equation}\label{ode.Tdef}
f(t,\dd(t)) \frac{\d}{\d t} \epd(t) = -\frac{\epd(t)}{l(t,\dd(t))} - g(t,\dd(t)), \qquad \epd(0)=0,
\end{equation}
on a time interval $[0,T]$. 
The solution to \eqref{ode.Tdef} can be written as 
\begin{equation}\label{ode.int}
\epd(t) = -\int_0^t  g(\tau,\dd(\tau)) f^{-1}(\tau,\dd(\tau)) \exp\left\{-\int_\tau^t l^{-1}(s,\dd(s)) f^{-1}(s,\dd(s)) {\rm d}s\right\}  {\rm d}\tau.
\end{equation}
for $t\in [0,T]$ as far as the integral on the right hand side exists. As we are concerned with a local existence only, for simplicity we assume here $T\leq \frac12$.

\begin{lemma}\label{l:ODE1} Let there be a constant $c>0$ such that $|g(t,\dd(t))|< \frac c{t |\log t|}$ for all $\dd$ with $\|\dd\|_\infty<\frac{|B|}{2|R|}$. Then there exists $T^*>0$ such that $F:\mathcal B_{X_{T^*}}\left(\frac{|B|}{2|R|}\right) \mapsto \mathcal B_{X_{T^*}}\left(\frac{|B|}{2|R|}\right)$. 
\end{lemma}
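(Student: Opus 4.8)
The idea is to bound $|\epd(t)|$ directly from the variation--of--constants formula \eqref{ode.int} and to show it stays below $\frac{|B|}{2|R|}$ once the time horizon $T$ is short enough; every constant below is uniform over $\dd\in\mathcal B_{X_T}\left(\frac{|B|}{2|R|}\right)$ and over small $t$. First I record the bounds on the coefficients. By \eqref{eq:closeness} the functions $\rho_\pm^\nu,m_{\pm2}^\nu$ (hence $R$, $B$ and $D:=(2-\rho_-^\nu)(\rho_+^\nu-2)$, which is $\epd$--independent) stay close to the Riemann values; in particular $D$ is bounded above and below by positive constants, and for $\|\dd\|_\infty<\frac{|B|}{2|R|}$ the quantity $-B+R\dd$ stays in a compact subinterval of $(0,\infty)$. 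Differentiating \eqref{eq:betaep} in $\epd$ gives
\[
f(t,\epd)=\de_\ep\beta(t,\epd)=\frac{\sqrt{D}}{2\sqrt{-B+R\epd}},
\]
so $f$ and $f^{-1}$ are both bounded by a constant $M_1$. Subtracting \eqref{eq:numep} from \eqref{eq:nupep} and using $R<0$ shows $\nu_+-\nu_-$ is bounded above and below by positive constants, whence by \eqref{eq:ldef} we get $c_1 t\le l(t)\le c_2 t$ for $t$ small.

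The decisive observation concerns the exponential factor in \eqref{ode.int}. Since $l>0$ and $f>0$, the exponent is nonpositive, so the factor is $\le1$; more precisely $l^{-1}(s)f^{-1}(s)\ge\frac{1}{c_2 M_1 s}=:\frac{c_3}{s}$ with $c_3>0$, and hence for $0<\tau\le t$
\[
\exp\left\{-\int_\tau^t l^{-1}(s,\dd(s))f^{-1}(s,\dd(s))\,{\rm d}s\right\}\le\exp\left\{-c_3\log\frac t\tau\right\}=\left(\frac\tau t\right)^{c_3}.
\]
Inserting this, together with the hypothesis $|g(\tau,\dd(\tau))|<\frac{c}{\tau|\log\tau|}$ and $|f^{-1}|\le M_1$, into \eqref{ode.int}, and using $|\log\tau|\ge|\log t|$ for $0<\tau\le t<1$, we obtain
\[
|\epd(t)|\le\frac{cM_1}{t^{c_3}}\int_0^t\frac{\tau^{c_3-1}}{|\log\tau|}\,{\rm d}\tau\le\frac{cM_1}{t^{c_3}|\log t|}\int_0^t\tau^{c_3-1}\,{\rm d}\tau=\frac{cM_1}{c_3|\log t|}.
\]
The same computation shows the integral in \eqref{ode.int} converges and $\epd(t)\to0$ as $t\to0^+$, so $\epd=F(\dd)$ is continuous on $[0,T]$ with $\epd(0)=0$, i.e.\ $F(\dd)\in X_T$ (continuity on $(0,T]$ being routine, the integrand of \eqref{ode.int} being continuous there).

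Because $t\mapsto1/|\log t|$ is increasing on $(0,1)$, the above yields $\|F(\dd)\|_{X_T}\le\frac{cM_1}{c_3|\log T|}$ for every $\dd$ in the ball, and it suffices to pick $T^*>0$ with $\frac{cM_1}{c_3|\log T^*|}\le\frac{|B|}{2|R|}$, e.g.\ any $T^*\le\exp\left\{-\,2|R|cM_1/(c_3|B|)\right\}$. The only real difficulty is the one anticipated before the lemma: the forcing $g$ has a singularity at $t=0$ for which the naive bound $\int_0^t\frac{{\rm d}\tau}{\tau|\log\tau|}$ diverges like $\log|\log\tau|$, so a Gr\"onwall/Picard argument using only $|l^{-1}f^{-1}|$ fails; the estimate closes only because the exponential in \eqref{ode.int} carries the favourable sign and supplies the extra weight $(\tau/t)^{c_3}$ with a strictly positive exponent, which simultaneously makes the integral converge and forces $\epd$ to decay like $1/|\log t|$, so that the ball is preserved by shrinking $T^*$.
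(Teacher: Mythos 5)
Your proof is correct and follows essentially the same route as the paper: both work from the explicit formula \eqref{ode.int}, use $l(t,\dd)\sim t$ and $f(t,\dd)\sim 1$ (via \eqref{eq:fdef} and \eqref{eq:closeness}) to turn the exponential into the weight $(\tau/t)^{c_3}$, and then choose $T^*$ small using the logarithmic decay of the resulting bound. The only difference is cosmetic: where the paper estimates the final integral by H\"older, obtaining $|\epd(t)|\leq C/\sqrt{|\log t|}$, you use the monotonicity of $\tau\mapsto 1/|\log\tau|$, which is slightly simpler and even gives the sharper bound $C/|\log t|$, fully compatible with the later use in Lemma \ref{l:ODE2} and Corollary \ref{c:coro}.
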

\begin{proof}
It suffices to show that the integral on the right hand side of \eqref{ode.int} is finite and that it converges to $0$ as $t\to 0$. It is easy to observe that for $\dd \in \mathcal B_{X_{T}}\left(\frac{|B|}{2|R|}\right)$ there exist constants $c_1>0$ and $c_2>0$ such that $l(t,\dd(t))\in (c_1t, c_2t)$. Similarly expressing $f(t,\dd(t))$ from \eqref{eq:betaep} we have
\begin{equation}\label{eq:fdef}
f(t,\dd(t)) = \frac{(2-\rho_-^\nu)(\rho_+^\nu-2)}{2\sqrt{(-B+R\dd)(2 - \rho_-^\nu)(\rho_+^\nu - 2)}}
\end{equation}
and thus $f(t,\dd(t))\in (c_1,c_2)$.

Thus,  \eqref{ode.int} and the H\"older inequality yield
\begin{multline}\label{ode.sol.est}
|\epd(t)| \leq c\int_0^t \frac 1{\tau|\log \tau|} \frac 1{c_1} \exp \left\{-\int_\tau^t \frac 1{c_2^2 s}{\rm d}s\right\} {\rm d}\tau = \frac{c}{c_1}\int_0^t \frac 1{\tau|\log\tau|} \frac{\tau^{c_2^{-2}}}{t^{c_2^{-2}}} {\rm d}\tau\\
\leq \frac{c}{c_1 t^{c_2^{-2}}} \left(\int_0^t \frac 1{\tau (\log \tau)^2}\dtau\right)^{1/2} \left(\int_0^t \tau^{c_2^{-4} - 1}\dtau\right)^{1/2}\\
= \frac{c}{c_1 t^{c_2^{-2}}} \sqrt{\frac{1}{|\log t|}} \frac{t^{c_2^{-2}}}{\sqrt {2c_2^{-2}}} = \frac{c}{c_1 \sqrt{2c_2^{-2}}} \sqrt{\frac{1}{|\log t|}}.
\end{multline}
It immediately follows that $\epd \to 0$ as $t\to 0$. Furthermore, we may choose $T^*>0$ in order to have
$$
 \frac{c}{c_1 \sqrt{2c_2^{-2}}} \sqrt{\frac{1}{|\log t|}}\leq  \frac{c}{c_1 \sqrt{2c_2^{-2}}} \sqrt{\frac{1}{|\log T^*|}} \leq \frac{|B|}{2|R|}.
$$
\end{proof}

\begin{lemma}\label{l:ODE2}
Let the assumptions of Lemma \ref{l:ODE1} be satisfied and let there be a constant $C > 0$ such that
\begin{equation}\label{eq:deltagest}
|g(t,\dd^1(t)) - g(t,\dd^2(t))| \leq   C \|\dd^1-\dd^2 \|_{L^\infty(0,T^*)}\frac{1}{t|\log t|}
\end{equation}
for all $t < T^*$ and $\dd^1,\dd^2 \in \mathcal B_{X_{T^{*}}}\left(\frac{|B|}{2|R|}\right)$. Then there exists $T^{**}\in (0,T^*]$ such that $F:\mathcal B_{X_{T^{**}}}\left(\frac{|B|}{2|R|}\right) \mapsto \mathcal B_{X_{T^{**}}}\left(\frac{|B|}{2|R|}\right)$ is a contraction.
\end{lemma}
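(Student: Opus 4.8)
The plan is to upgrade the self-map property established in Lemma \ref{l:ODE1} to a contraction estimate by following exactly the same integral representation \eqref{ode.int} and tracking the difference of two iterates. Given $\dd^1, \dd^2 \in \mathcal B_{X_{T^*}}\left(\frac{|B|}{2|R|}\right)$, write $\epd^i = F(\dd^i)$ via \eqref{ode.int} and subtract. The difference $\epd^1(t) - \epd^2(t)$ splits into three groups of terms according to which factor in the integrand is being varied: the outer factor $g(\tau,\dd^i(\tau)) f^{-1}(\tau,\dd^i(\tau))$, and the exponential weight $\exp\{-\int_\tau^t l^{-1} f^{-1}\}$. Each of these must be estimated by $C\|\dd^1 - \dd^2\|_{X_{T^*}}$ times an integrable-in-$\tau$ kernel of the same shape as in \eqref{ode.sol.est}.

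First I would record the Lipschitz bounds needed for the auxiliary functions. From the explicit formula \eqref{eq:fdef} for $f$, together with \eqref{eq:numep}--\eqref{eq:betaep} for $l = \widetilde\nu_+ - \widetilde\nu_-$ integrated in time, one reads off that on the ball $\mathcal B_{X_{T^*}}\left(\frac{|B|}{2|R|}\right)$ both $f$ and $l/t$ are smooth functions of $\dd$ bounded above and below away from zero (this is already used in Lemma \ref{l:ODE1}), hence Lipschitz in $\dd$ with a $t$-independent constant; the same for $f^{-1}$ and $(l/t)^{-1}$. Combined with the hypothesis \eqref{eq:deltagest} on $g$, this gives
\[
\left| g(\tau,\dd^1)f^{-1}(\tau,\dd^1) - g(\tau,\dd^2)f^{-1}(\tau,\dd^2) \right| \leq C\,\|\dd^1 - \dd^2\|_{X_{T^*}} \frac{1}{\tau|\log\tau|},
\]
using the triangle inequality, the uniform bound $|g| \leq c/(\tau|\log\tau|)$, and the uniform bounds on $f^{-1}$. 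For the exponential weights I would use $|e^{-a} - e^{-b}| \leq |a - b|$ for $a,b \geq 0$ together with
\[
\left| \int_\tau^t l^{-1}(s,\dd^1)f^{-1}(s,\dd^1) - l^{-1}(s,\dd^2)f^{-1}(s,\dd^2)\,\d s \right| \leq C\|\dd^1 - \dd^2\|_{X_{T^*}} \int_\tau^t \frac{\d s}{s} = C\|\dd^1-\dd^2\|_{X_{T^*}} \log\frac{t}{\tau},
\]
so the exponential-difference contribution carries an extra logarithmic factor $\log(t/\tau)$ against the same $\tau^{c_2^{-2}}/t^{c_2^{-2}}$-type decay.

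Putting the pieces together, $|\epd^1(t) - \epd^2(t)|$ is bounded by $C\|\dd^1 - \dd^2\|_{X_{T^*}}$ times an integral of the form
\[
\int_0^t \frac{1}{\tau|\log\tau|}\left(1 + \log\frac{t}{\tau}\right) \frac{\tau^{c_2^{-2}}}{t^{c_2^{-2}}}\,\d\tau,
\]
and the Hölder/direct estimation performed in \eqref{ode.sol.est} — the extra factor $\log(t/\tau) \leq |\log\tau| + |\log t|$ is harmless and is absorbed, for $t$ small, at the cost of a power of $|\log t|$ in the numerator which still tends to $0$ — shows this is bounded by $\omega(t)\,\|\dd^1 - \dd^2\|_{X_{T^*}}$ with $\omega(t) \to 0$ as $t \to 0$. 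Choosing $T^{**} \in (0, T^*]$ small enough that $\sup_{t \leq T^{**}} \omega(t) < 1$ makes $F$ a contraction on $\mathcal B_{X_{T^{**}}}\left(\frac{|B|}{2|R|}\right)$; note that this ball is still mapped into itself by Lemma \ref{l:ODE1} since shrinking the time interval only decreases $\|\epd\|_{X_{T^{**}}}$. The main obstacle is purely bookkeeping: making sure the logarithmic factor coming from the difference of exponentials does not destroy the smallness of the kernel near $\tau = 0$, which works precisely because $\tau^{c_2^{-2}}$ beats any power of $|\log\tau|$; and checking that the Lipschitz constants for $f, f^{-1}, l/t, (l/t)^{-1}$ in the variable $\dd$ are genuinely uniform in $t$ on the ball, which follows from $|{-}B + R\dd|$ staying bounded below on $\mathcal B_{X_{T^*}}\left(\frac{|B|}{2|R|}\right)$ by the choice $\|\dd\|_\infty < \frac{|B|}{2|R|}$ together with \eqref{eq:closeness}.
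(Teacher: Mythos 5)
Your overall strategy is viable and in fact takes a slightly different route from the paper: you subtract the two integral representations \eqref{ode.int} directly and perturb the exponential weight, whereas the paper writes the ODE for $\epd^1-\epd^2$, keeps the damping coefficient $L^1=(l^1f^1)^{-1}$ for the difference, and puts $\epd^2(L^1-L^2)$ into the source, which is why it needs the decay $|\epd^2(\tau)|\lesssim |\log\tau|^{-1/2}$ from \eqref{ode.sol.est}; your decomposition avoids that ingredient but instead requires care with the difference of exponentials. There, the inequality you quote, $|e^{-a}-e^{-b}|\le|a-b|$, is not enough: if you drop the exponential weight the resulting kernel $\frac{1}{\tau|\log\tau|}\log\frac{t}{\tau}$ behaves like $\frac1\tau$ near $\tau=0$ and the integral diverges. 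You must use the mean--value form $|e^{-a}-e^{-b}|\le|a-b|\,e^{-\min(a,b)}$ together with the lower bound $L^i(s)\ge (c_2^{2}s)^{-1}$, which gives $e^{-\min(a,b)}\le(\tau/t)^{c_2^{-2}}$; your displayed kernel does assume this, but the step has to be made explicit since it is exactly what keeps the estimate finite.

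The more serious gap is the final smallness claim, which is the whole point of the lemma. Splitting $\log\frac{t}{\tau}\le|\log\tau|+|\log t|$ as you propose does \emph{not} produce a factor tending to $0$: the $|\log\tau|$ piece cancels the $|\log\tau|$ in the denominator and yields
\begin{equation*}
\int_0^t \frac{1}{\tau}\,\frac{\tau^{c_2^{-2}}}{t^{c_2^{-2}}}\,\d\tau \;=\; c_2^{2},
\end{equation*}
a fixed constant (and similarly the $|\log t|$ piece, after integrating $\frac{1}{\tau|\log\tau|}(\tau/t)^{c_2^{-2}}\sim |\log t|^{-1}$, gives a constant), so as written you only obtain $|\epd^1(t)-\epd^2(t)|\le C\|\dd^1-\dd^2\|_{X_{T^*}}$ with no reason for $C<1$. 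The conclusion is nevertheless true, but it needs a sharper evaluation: e.g.\ substitute $u=\log(t/\tau)$ to rewrite the problematic integral as $\int_0^\infty \frac{u}{|\log t|+u}\,e^{-c_2^{-2}u}\,\d u$, which tends to $0$ as $t\to0$ by dominated convergence (it is $O(1/|\log t|)$); alternatively follow the paper's H\"older-type manipulation as in \eqref{ode.sol.est}, which produces explicit factors $|\log t|^{-1/2}$, $|\log t|^{-1/4}$. The Lipschitz bounds you invoke for $f$, $f^{-1}$, $l/t$ in $\dd$ (uniform in $t$ on the ball, thanks to $-B+R\dd$ being bounded away from zero there) are correct and match the paper's estimates \eqref{eq:fdef}, \eqref{eq:ldef000}; once the two points above are repaired, your argument closes.
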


\begin{proof}
Take $\dd^1,\dd^2 \in \mathcal B_{X_{T^{*}}}\left(\frac{|B|}{2|R|}\right)$ and let $\epd^1 = F(\dd^1)$ and $\epd^2 = F(\dd^2)$. We also use a notation $h^i = h(t,\dd^i)$ for $h = f,g,l$ and $i = 1,2$. We have
\begin{equation}\label{ode.dif}
\frac{\d}{\d t}(\epd^1 - \epd^2) = -\frac{\epd^1 - \epd^2}{l^1f^1} - \epd^2\left(\frac{1}{l^1f^1} - \frac{1}{l^2f^2}\right) - \left(\frac{g^1}{f^1} - \frac{g^2}{f^2}\right).
\end{equation}
We use a notation $L^1 = (l^1 f^1)^{-1}$, $L^2 = (l^2 f^2)^{-1}$, $G^1 = g^1(f^1)^{-1}$ and $G^2= g^2(f^2)^{-1}$. From \eqref{ode.dif} we deduce
\begin{equation*}
(\epd^1 - \epd^2)(t) = \int_0^t \left(-\epd^2 (L^1 - L^2) - (G^1 - G^2)\right)(\tau) \exp\left(-\int_\tau^t L^1(s){\rm d}s\right){\rm d}\tau
\end{equation*}
We have
$$
|\epd^1(t)- \epd^2(t)| \leq \int_0^t\left(|\epd^2|(\tau)|L_1 - L_2|(\tau) + |G_1 - G_2|(\tau) \right)\frac{\tau^{c_2^{-2}}}{t^{c_2^{-2}}}{\rm d}\tau
$$

We will estimate the differences $|L^1 - L^2|$ and $|G^1 - G^2|$ in terms of $|\dd^1 - \dd^2|$. First, it is easy to observe from \eqref{eq:fdef}
\begin{equation}\label{eq:deltafest}
|f(t,\dd^1(t)) - f(t,\dd^2(t))| \leq C \|\dd^1 - \dd^2\|_{L^\infty(0,T^*)}.
\end{equation}
Similarly we have
\begin{equation}\label{eq:ldef000}
l(t,\dd(t)) = \int_0^t \frac{1}{|R|}\left(\sqrt{(-B+R\dd)\frac{\rho_+^\nu - 2}{2 - \rho_-^\nu}} + \sqrt{(-B+R\dd)\frac{2 - \rho_-^\nu}{\rho_+^\nu - 2}}\right)
\end{equation}
and therefore
\begin{equation}\label{eq:deltalest}
|l(t,\dd^1(t)) - l(t,\dd^2(t))| \leq C \|\dd^1 - \dd^2\|_{L^\infty(0,T^*)} t.
\end{equation}
Plugging together \eqref{eq:deltafest} and \eqref{eq:deltalest} we get
\begin{equation}\label{eq:deltaLest}
|L^1(\tau) - L^2(\tau)| \leq \frac{|f^1 - f^2|}{l^1f^1f^2} + \frac{|l^1-l^2|}{f^2l^1l^2} \leq C\|\dd^1 - \dd^2\|_{L^\infty(0,T^*)} \frac{1}{\tau}
\end{equation}
which together with \eqref{ode.sol.est} yield
\begin{equation}\label{eq:deltaLest2}
|\epd^2(\tau)||L^1(\tau) - L^2(\tau)| \leq C\|\dd^1 - \dd^2\|_{L^\infty(0,T^*)} \frac{1}{\tau \sqrt{|\log \tau|}}.
\end{equation}
Using the assumption \eqref{eq:deltagest} we get also
\begin{equation}\label{eq:deltaGest}
|G^1(\tau) - G^2(\tau)| \leq \frac{|g^1||f^1 - f^2|}{f^1f^2} + \frac{|g^1-g^2|}{f^2} \leq  C\|\dd^1 - \dd^2\|_{L^\infty(0,T^*)} \frac{1}{\tau|\log \tau|}
\end{equation}
All together we end up with
\begin{equation}\label{eq:dif.epsilonu}
|\epd^1(t)- \epd^2(t)| \leq C \|\delta_1 - \delta_2\|_{L^\infty(0,T^*)}\left(\int_0^t\frac{1}{\tau | \log \tau|}\frac{\tau^{c_2^{-2}}}{t^{c_2^{-2}}}\dtau + \int_0^t\frac{1}{\tau \sqrt{|\log \tau|}}\frac{\tau^{c_2^{-2}}}{t^{c_2^{-2}}}\dtau \right). 
\end{equation}
the first integral on the right hand side can be handled as in \eqref{ode.sol.est}. For the second integral we use the H\"older inequality to get
\begin{multline}
\int_0^t\frac{1}{\tau \sqrt{|\log \tau|}}\frac{\tau^{c_2^{-2}}}{t^{c_2^{-2}}}\dtau \leq \frac 1{t^{c_2^{-2}}} \left(\int_0^t \frac 1{\tau(\log \tau)^2}\dtau\right)^{1/4}\left(\int_0^t \tau^{4/3{c_2^{-2}} - 1}\dtau\right)^{3/4}\\
= \frac1{\sqrt[4]{|\log t|}}\frac{1}{\sqrt[4]{4/3c_2^{-2}}^3}. 
\end{multline}
Consequently, \eqref{eq:dif.epsilonu} yields
\begin{equation}
|\epd^1(t)- \epd^2(t)| \leq C \|\delta_1 - \delta_2\|_{L^\infty(0,T^*)} \left(\frac1{\sqrt{|\log t|}} + \frac 1{\sqrt[4]{|\log t|}}\right)
\end{equation}
Now it is sufficient to choose $T^{**}\in (0,T^*]$ such that $C\left(\frac1{\sqrt{|\log t|}} + \frac 1{\sqrt[4]{|\log t|}}\right)<1$.
\end{proof}

As a consequence of the previous two lemmas and the Banach fixed point theorem we get
\begin{coro}\label{c:coro}
Under the assumptions of Lemmas \ref{l:ODE1} and \ref{l:ODE2} there exists $T^{**}>0$ and a solution $\epd \in C(0,T^{**})$ to \eqref{eq:ODE2}-\eqref{eq:ODE2IC} such that 
\begin{equation}
|\epd(t)| \leq \frac{C}{\sqrt{|\log t|}}.
\end{equation}
for some positive constant $C$.
\end{coro}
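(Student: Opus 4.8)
The statement is essentially a bookkeeping corollary of the Banach fixed point theorem applied to the operator $F$ built above, so the plan is simply to assemble the pieces already supplied by Lemmas \ref{l:ODE1} and \ref{l:ODE2}. First I would record that for any $\tau>0$ the ball $\mathcal B_{X_\tau}\!\left(\frac{|B|}{2|R|}\right)$ is a closed subset of the Banach space $X_\tau=\{h\in C[0,\tau]:\, h(0)=0\}$, hence itself a complete metric space for the metric induced by $\|\cdot\|_{X_\tau}$; this is precisely the space on which $F$ has been shown to act, the defining integral \eqref{ode.int} being well defined there thanks to the estimate \eqref{ode.sol.est}.

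Next I would invoke Lemma \ref{l:ODE1} to obtain $T^*>0$ with $F\big(\mathcal B_{X_{T^*}}(\tfrac{|B|}{2|R|})\big)\subset \mathcal B_{X_{T^*}}(\tfrac{|B|}{2|R|})$, and then Lemma \ref{l:ODE2} to obtain $T^{**}\in(0,T^*]$ such that the restriction of $F$ to $\mathcal B_{X_{T^{**}}}(\tfrac{|B|}{2|R|})$ is a contraction; note that the self-map property on the shorter interval is inherited from the one on $[0,T^*]$, since passing to $[0,T^{**}]$ cannot increase the sup-norm. The Banach fixed point theorem then produces a unique $\epd\in \mathcal B_{X_{T^{**}}}(\tfrac{|B|}{2|R|})$ with $F(\epd)=\epd$.

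It remains to translate the fixed point identity. By the very definition of $F$ through \eqref{ode.Tdef}, the identity $F(\epd)=\epd$ says exactly that $\epd$ solves
\[
f(t,\epd(t))\,\frac{\d}{\d t}\epd(t) = -\frac{\epd(t)}{l(t,\epd(t))} - g(t,\epd(t)),\qquad \epd(0)=0,
\]
on $[0,T^{**}]$; recalling \eqref{eq:fdef0}--\eqref{eq:gdef0}, i.e. $f=\de_\ep\beta$ and $g=\de_t\beta$, this is precisely \eqref{eq:ODE2}--\eqref{eq:ODE2IC}, while continuity of $\epd$ on $(0,T^{**})$ is built into membership in $X_{T^{**}}$. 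Finally, the quantitative bound is obtained by reading off the chain of inequalities \eqref{ode.sol.est} in the proof of Lemma \ref{l:ODE1} with the admissible choice $\dd=\epd$ (legitimate precisely because $\epd$ lies in the ball): this yields $|\epd(t)|\le \frac{c}{c_1\sqrt{2c_2^{-2}}}\,|\log t|^{-1/2}$, i.e. the claim with $C:=\frac{c}{c_1\sqrt{2c_2^{-2}}}$.

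I do not expect a genuine obstacle at this stage, because all the analytic difficulty — the singularity of $g$ at $t=0$, the bounds $l\sim t$ and $f\sim 1$, and the Lipschitz estimates — has already been absorbed into Lemmas \ref{l:ODE1} and \ref{l:ODE2}. The only point demanding a little care is the consistent choice of the final time: one must take $T^{**}$ no larger than the thresholds coming from the self-map step, the contraction step, and (if one wants the clean constant in the bound) the smallness of $|\log t|^{-1}$, which is harmless since these are finitely many positive numbers. A separate matter, to be checked when the construction is completed, is that the concrete $g=\de_t\beta$ arising from \eqref{eq:betaep} actually satisfies the hypotheses of Lemmas \ref{l:ODE1} and \ref{l:ODE2}; but that verification lies outside the scope of the present corollary, whose statement already presupposes those assumptions.
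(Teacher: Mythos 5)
Your argument is correct and coincides with the paper's own (implicit) proof: the corollary is stated there simply as a consequence of Lemmas \ref{l:ODE1} and \ref{l:ODE2} via the Banach fixed point theorem, with the bound $|\epd(t)|\leq C|\log t|^{-1/2}$ read off from the estimate \eqref{ode.sol.est} applied to the fixed point. Your additional remarks on completeness of the closed ball, inheritance of the self-map property on the shorter interval, and the translation of $F(\epd)=\epd$ into \eqref{eq:ODE2}--\eqref{eq:ODE2IC} are exactly the bookkeeping the paper leaves to the reader.
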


\subsection{Construction of the initial data}

Now we are ready to present the final argument of the proof of Theorem \ref{t:main}. We first prescribe the functions $(\rho_\pm^\nu,m_{\pm 2}^\nu)(t)$ in a suitable way, such that \eqref{eq:closeness} is satisfied. With these functions given, we solve the Rankine-Hugoniot conditions \eqref{eq:cont_left3}-\eqref{eq:mom_2_right3} together with the differential equation \eqref{eq:SSME6} as it is described in Sections \ref{ss:Gen} and \ref{ss:ODE}, in particular we obtain the curves $\widetilde{\nu}_\pm(t)$ defining the generalized fan partition by~\eqref{eq:numep} and \eqref{eq:nupep}. Then we use the method of characteristics to map the given functions $(\rho_\pm^\nu,m_{\pm 2}^\nu)(t)$ to the initial data $(\rho^0_\pm,m_{\pm 2}^0)(x_2)$ and we prove that initial data constructed this way will be of the type presented in Lemma \ref{l:f0h}, whence they will be generated by a smooth compression wave.


We use the relation between the solution $\lambda_1$ to the Burgers equation \eqref{eq:Bur} and the solution $(\rho, m)$ to the Euler equations \eqref{eq:EEm1}-\eqref{eq:EEm2}. Recalling \eqref{eq:CharSpeed} and \eqref{eq:RiemInv} we get 
\begin{equation}\label{eq:rhom}
\rho = \frac{(w_1 - \lambda_1)^2}{18} \qquad m_2 = \frac{(w_1-\lambda_1)^2(2\lambda_1-w_1)}{54}.
\end{equation}
We know that $(\rho,m)$ is a solution to the Euler system if $w_1$ is constant and $\lambda_1$ is a continuous solution to the Burgers equation. We set $w_1 \equiv 4\sqrt{2}$ and we prescribe the functions $\lambda_{1\pm}^\nu(t) = \lambda_1(t,\widetilde{\nu}_\pm(t))$ using the function $f_0$ defined in \eqref{eq:f0def} as follows
\begin{equation}\label{eq:lambda1m}
\lambda_{1-}^\nu(t) = \left\{\begin{split}
&f_0(t) + \sqrt{2}-\frac{\zeta_2}{T} \qquad &\text{ for } 0 < t < \delta \\
&\text{smooth monotone} \qquad &\text{ for } \delta < t < \delta' \\
&\sqrt{2} \qquad &\text{ for } t > \delta'
\end{split} 
\right.
\end{equation}
and 
\begin{equation}\label{eq:lambda1p}
\lambda_{1+}^\nu(t) = \left\{\begin{split}
&-f_0(t) - 2\sqrt{2} + \frac{\zeta_2}{T} \qquad &\text{ for } 0 < t < \delta \\
&\text{smooth monotone} \qquad &\text{ for } \delta < t < \delta' \\
&-2\sqrt{2} \qquad &\text{ for } t > \delta'
\end{split} 
\right.
\end{equation}
for some positive small $\zeta_2$, $\delta$ and $\delta' > \delta$, such that $\lambda_{1\pm}^\nu$ are monotone with monotone first derivatives. Functions $(\rho_\pm^\nu,m_{\pm 2}^\nu)(t)$ are then defined through \eqref{eq:rhom} with $\lambda_1 = \lambda_{1 \pm}^\nu$.


This choice of functions $\rho_\pm^\nu$ and $m_{\pm 2}^\nu$ ensures that there exists a constant $c > 0$ such that
\begin{equation}
|\de_t \rho_-^\nu| + |\de_t \rho_+^\nu| + |\de_t m_{-2}^\nu| + |\de_t m_{+2}^\nu| \leq c|f_0'(t)| \leq \frac{c}{t|\log t|}.
\end{equation}
It is then not very difficult to conclude from \eqref{eq:betaep} and \eqref{eq:gdef0} that both 
\begin{equation}\label{eq:gestimateproved}
|g(t,\dd(t))| \leq \frac{C}{t|\log t|}    
\end{equation}
 and \eqref{eq:deltagest} are satisfied and thus, as stated in Corollary \ref{c:coro}, there exists $T^{**}$ and a solution $\epd \in C(0,T^{**})$ to \eqref{eq:ODE2}-\eqref{eq:ODE2IC}.

This means, that on $(0,T^{**})$ we constructed functions $\nu_\pm(t)$, $\beta(t)$, $\ep_{2L}(t)$ and $\epd(t)$ with $\ep_{2R}(t) = \ep_{2L}(t) - \epd(t)$ satisfying \eqref{eq:SSME6} together with the Rankine-Hugoniot conditions \eqref{eq:cont_left3}-\eqref{eq:mom_2_right3} with values of $\rho_1$ and $K$ given by \eqref{eq:rKchoice}. 

Moreover, using the continuity argument, we know that the admissibility inequalities \eqref{eq:E_left3}-\eqref{eq:E_right3} and subsolution inequalities \eqref{eq:sub_trace3}-\eqref{eq:sub_det3} are satisfied as strict inequalities for $\epd = 0$ and values of $(\rho_\pm^\nu,m_{\pm 2}^\nu)$ are close to $(R_\pm, M_{\pm 2})$ and therefore there exists time $\delta_0 > 0$ such that all these inequalities are satisfied as well with $\epd(t)$ constructed above.

Therefore we constructed an admissible generalized fan subsolution on time interval $(0,\delta_0)$ with the initial data given by mapping the values of $(\rho_\pm^\nu,m_{\pm 2}^\nu)$ from points $x_2 = \widetilde{\nu}_\pm(t)$ to $t = 0$ using the method of characteristics for the Burgers equation. It remains to show that initial data constructed in this way satisfy the assumptions of Lemma \ref{l:f0h} and therefore are generated by a smooth compression wave.

\begin{lemma}\label{l:last}
Let $\lambda_{1 \pm}^\nu(t)$ be defined as in \eqref{eq:lambda1m} and \eqref{eq:lambda1p}. Then there exists $\lambda_1^0(x_2)$ defined as in \eqref{eq:BurICGR}-\eqref{eq:Propertiesfpm} with $\lambda_- = \sqrt{2}$, $\lambda_+ = -2\sqrt{2}$ and with functions $h_\pm(x_2)$ satisfying the assumptions of Lemma \ref{l:f0h}, namely $h_\pm(0) = 0$, $h_\pm'(0) = K_\pm$ where $\pm K_\pm > 0$, and \eqref{eq:hass} holds, such that $\lambda_{1 \pm}^\nu(t) = \lambda_1(t,\widetilde{\nu}_\pm(t))$, where $\lambda_1(t,x)$ is a solution to the Burgers equation \eqref{eq:Bur}-\eqref{eq:BurIC}.
\end{lemma}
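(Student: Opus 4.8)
The plan is to run the method of characteristics for the Burgers equation \emph{backwards in time}, starting from the prescribed boundary values $\lambda_{1\pm}^\nu(t)$ along the curves $x_2=\widetilde\nu_\pm(t)$, and to recover the initial profile $\lambda_1^0(x_2)$ together with the representation \eqref{eq:Propertiesfpm}. The key structural fact is that $\lambda_1$ is constant along each characteristic line $x_2=\lambda_1^0(r)t+r$. Hence, fixing a sign (say $+$), the point $(t,\widetilde\nu_+(t))$ lies on the characteristic emanating from some foot-point $r=\sigma_+(t)$ on the initial line, and the value carried by that characteristic is exactly $\lambda_{1+}^\nu(t)$; that is, $\lambda_1^0(\sigma_+(t))=\lambda_{1+}^\nu(t)$ and $\widetilde\nu_+(t)=\lambda_{1+}^\nu(t)\,t+\sigma_+(t)$. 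Since $t\mapsto\lambda_{1+}^\nu(t)$ is strictly monotone on $(0,\delta)$ by \eqref{eq:lambda1p} (it is $-f_0(t)-2\sqrt2+\zeta_2/T$ there, and $f_0$ is strictly monotone), we can invert and define $h_+(x_2)$ implicitly through $x_2=\sigma_+(t)$, $h_+(\sigma_+(t))=t$ on the relevant small interval; then $f_0(h_+(x_2))=f_0(t)=-\lambda_{1+}^\nu(t)-2\sqrt2+\zeta_2/T$, which up to the affine normalization $a_+>0$, $b_+=\lambda_+ +\zeta_2/T$ with $\lambda_+=-2\sqrt2$ is precisely $\widetilde f_+(x_2)=-a_+f_0(h_+(x_2))+b_+$. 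The same construction mirrored in $x_2<0$ produces $h_-$ and $\widetilde f_-$.

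Next I would verify that $h_\pm$ meet the hypotheses of Lemma~\ref{l:f0h}. That $h_\pm(0)=0$ follows from $\widetilde\nu_\pm(0)=0$ and $\sigma_\pm(t)\to 0$ as $t\to0$; that $h_\pm'(0)=K_\pm$ with the correct sign follows from differentiating $x_2=\sigma_\pm(t)$ and $\widetilde\nu_\pm(t)=\lambda_{1\pm}^\nu(t)t+\sigma_\pm(t)$ at $t=0^+$ — here one uses that $\widetilde\nu_\pm'(0)=\nu_\pm(0)$ is finite and, crucially, different from $\lambda_\pm=\lambda_{1\pm}^\nu(0)$, so that $\sigma_\pm'(0)=\widetilde\nu_\pm'(0)-\lambda_\pm$ is a nonzero finite number and $h_\pm'(0)=1/\sigma_\pm'(0)=K_\pm$ with $\pm K_\pm>0$ once the signs of $\nu_\pm(0)-\lambda_\pm$ are checked against the Riemann subsolution values \eqref{eq:subsolRiem}. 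The derivative bound \eqref{eq:hass} is obtained from the inverse function theorem applied iteratively: $h_\pm$ is the inverse of the smooth map $t\mapsto\sigma_\pm(t)=\widetilde\nu_\pm(t)-\lambda_{1\pm}^\nu(t)t$, and since $\lambda_{1\pm}^\nu$ has the explicit form built from $f_0$ on $(0,\delta)$, while $\widetilde\nu_\pm(t)=\int_0^t\nu_\pm$ with $\nu_\pm$ given by \eqref{eq:numep}–\eqref{eq:nupep} in terms of $\rho_\pm^\nu,m_{\pm2}^\nu$ (themselves smooth functions of $\lambda_{1\pm}^\nu$ through \eqref{eq:rhom}) and $\epd$ (of size $C/\sqrt{|\log t|}$ by Corollary~\ref{c:coro}), all $n$-th derivatives of $\sigma_\pm$ are controlled by powers of $f_0^{(k)}$, which by \eqref{eq:f0dern} behave like $y^{-k}$ times a rational function of $\log y,\log|\log y|$; chaining through the Faà di Bruno formula for the inverse gives exactly a bound of the form $|x_2|^{-(n-1)}$ times a rational function of $\sqrt{|\log|x_2||}$ and $\log|\log|x_2||$, matching \eqref{eq:hass}. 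Finally one appeals to Lemma~\ref{l:f0h} to produce a genuine $\lambda_1^0\in C^\infty(\R)$ of the form \eqref{eq:BurICGR}–\eqref{eq:Propertiesfpm} whose Burgers evolution has the claimed boundary traces, with $\widetilde f_\pm$ extended arbitrarily but smoothly and strictly monotonically outside $|x_2|<\overline\zeta$.

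The main obstacle I expect is the derivative estimate \eqref{eq:hass}: one must show that inverting the characteristic foot-point map does not destroy the precise ``$|x_2|^{1-n}$ times rational-in-$(\sqrt{|\log|},\log|\log|)$'' structure, and in particular that the perturbation coming from $\epd(t)$ — which is only $C(0,T^{**})$ with modulus $1/\sqrt{|\log t|}$, not smooth with clean derivative bounds a priori — does not spoil the regularity of $\widetilde\nu_\pm$ near $t=0$. This is handled by observing that \eqref{eq:SSME6} forces $\frac{\d}{\d t}\beta=-\epd/l$, so $\epd$ inherits whatever smoothness the right-hand side carries, and a bootstrap using \eqref{eq:betaep} and the explicit $f_0$-dependence of the data shows $\epd$ is in fact $C^\infty$ on $(0,T^{**})$ with $n$-th derivative of order $t^{-n}/\sqrt{|\log t|}$ — a size strictly better than the $t^{-n}$ scale of $f_0^{(n)}$, hence subordinate in \eqref{eq:hass}. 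A secondary but routine point is checking the sign conditions $\pm K_\pm>0$ and $\lambda_-=\sqrt2>\lambda_+=-2\sqrt2$ so that the evolution is genuinely a compression wave; these reduce to the numerics already recorded in \eqref{eq:subsolRiem} together with the closeness estimates \eqref{eq:closeness}.
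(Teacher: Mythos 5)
Your proposal is correct and follows essentially the same route as the paper: you define $h_\pm$ as the inverse of the characteristic foot-point map $t\mapsto\widetilde{\nu}_\pm(t)-t\,\lambda_{1\pm}^\nu(t)$ (the paper writes this implicitly as $\widetilde{\nu}_\pm(h_\pm(x_2))=x_2+h_\pm(x_2)\lambda_{1\pm}^\nu(h_\pm(x_2))$), obtain $h_\pm(0)=0$ and $h_\pm'(0)=K_\pm$ by implicit differentiation with the sign check against \eqref{eq:subsolRiem}, and derive \eqref{eq:hass} from the explicit $f_0$-structure of $\lambda_{1\pm}^\nu$ together with estimates on $\nu_\pm'$, where the derivative of $\epd$ is controlled through the ODE \eqref{eq:ODE2} exactly as in the paper. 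Your explicit bootstrap for higher derivatives of $\epd$ is a point the paper treats only tersely, but it is the same mechanism.
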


\begin{proof}
First note that since the values of $\lambda_{1\pm}^\nu$ are close to $\lambda_\pm$, the graphs of $x_2 = \widetilde{\nu}_\pm(t)$ lie in the regions where the characteristics do not intersect and thus the problem is really well defined, see Lemma \ref{l:41}.

Let us present the argument for $\lambda_{1-}^\nu(t)$ which takes the values of $\lambda_1^0(x_2)$ for $x_2 < 0$. More precisely we have
\begin{equation}\label{eq:relation}
    \lambda_1^0(x_2) = \lambda_{1-}^\nu(h_-(x_2))
\end{equation}
where the mapping $h_-$ assigns to $x_2 \leq 0$ the time instant $h_-(x_2) \geq 0$ such that 
\begin{equation}\label{eq:hdef1}
    \widetilde{\nu}_-(h_-(x_2)) = x_2 + h_-(x_2)\lambda_1^0(x_2),
\end{equation}
i.e. the time instant where the characteristic line starting from the point $x_2$ crosses the graph of $y = \widetilde{\nu}_-(t)$. Note that more useful for us is to use \eqref{eq:relation} in order to rewrite \eqref{eq:hdef1} as
\begin{equation}\label{eq:hdef2}
    \widetilde{\nu}_-(h_-(x_2)) = x_2 + h_-(x_2)\lambda_{1-}^\nu(h_-(x_2)).
\end{equation}
Obviously the function $h_- \in C^\infty((0,\infty))$ and it holds $h_-(0) = 0$. We use \eqref{eq:hdef2} to express the derivatives of $h_-$, this way we obtain
\begin{equation}\label{eq:hder}
    h_-'(x_2) = \frac{1}{\widetilde{\nu}_-'(h_-(x_2)) - \lambda_{1-}^\nu(h_-(x_2)) - h_-(x_2)(\lambda_{1-}^\nu)'(h_-(x_2))}
\end{equation}
and thus (recalling also that we have $\nu_- = \widetilde{\nu}_-'$)
\begin{equation}
    h_-'(0) = \lim_{x_2 \rightarrow 0} h_-'(x_2) = \frac{1}{\nu_-(0) - \sqrt{2} + \frac{\zeta_2}{T}}.
\end{equation}
Here $\nu_-(0)$ is close to the value $\frac{-\sqrt{8}-\sqrt{26}}{3}$ from \eqref{eq:subsolRiem} and therefore $h_-'(0) = K_-$ for some $K_- < 0$. Next we calculate the second derivative of $h_-$ and we obtain
\begin{equation}\label{eq:hder2}
    h_-''(x_2) = - \frac{\nu_-'(h_-(x_2)) - 2(\lambda_{1-}^\nu)'(h_-(x_2)) - h_-(x_2)(\lambda_{1-}^\nu)''(h_-(x_2)) }{(\nu_-(h_-(x_2)) - \lambda_{1-}^\nu(h_-(x_2)) - h_-(x_2)(\lambda_{1-}^\nu)'(h_-(x_2)))^3}.
\end{equation}
As we already observed, the denominator on the right hand side of \eqref{eq:hder2} is bounded and bounded away from zero as $x_2 \rightarrow 0$. Concerning the numerator we know that as $t \rightarrow 0$ we have
\begin{equation}
    |(\lambda_{1-}^\nu)' (t)| + |t (\lambda_{1-}^\nu)'' (t)| \leq C( |f_0'(t)| + 
    |t f_0''(t)| ) \leq \frac{C}{t}P_2(\log t, \log|\log t|)
\end{equation}
with some rational function $P_2$. Therefore it remains to prove that $\nu_-'(t)$ has the same behavior close to zero. We recall here the expression $\eqref{eq:numep}$ which relates $\nu_-$ to $\epd$ and to $\rho_\pm^\nu, m_{\pm 2}^\nu$ (and thus to $\lambda_{1 \pm}^\nu$ and ultimately to $f_0(t)$)
\begin{equation}
    \nu_- = \frac{A}{R} + \frac{1}{R}\sqrt{(-B+R\epd)\frac{\rho_+^\nu - 2}{2 - \rho_-^\nu}}.
\end{equation}
Here $R, A, B$ are defined in \eqref{eq:Rdef}, \eqref{eq:Adef} and \eqref{eq:Bdef} respectively. It is not difficult to observe that writing $(\rho_\pm^\nu,R,A,B) = (\rho_\pm^\nu,R,A,B)(f_0(t))$, the quantities $\rho_\pm^\nu, R, A, B$ are polynomials in the variable $f_0$, see \eqref{eq:rhom}, \eqref{eq:lambda1m} and \eqref{eq:lambda1p}. In particular the derivatives of these quantities with respect to $f_0$ are also polynomials and evaluated in $f_0(t)$ are bounded as $t \rightarrow 0$. In order to estimate $\epd'(t)$ we use the equation \eqref{eq:ODE2} and the estimates we have already obtained in the proof of Lemma \ref{l:ODE1}, in Corollary \ref{c:coro} and in \eqref{eq:gestimateproved}
\begin{align}
\left|\frac{1}{l(t,\epd(t))}\right| &\leq \frac{C}{t}    \\
\left|\frac{1}{f(t,\epd(t))}\right| &\leq C \\
\left|g(t,\epd(t))\right| & \leq \frac{C}{t|\log t|} \\
\left|\epd(t)\right| &\leq \frac{C}{\sqrt{\log t}}.
\end{align}
Therefore we can conclude that
\begin{equation}
    |\nu_-'(t)| \leq C(|f_0'(t)| + |\epd'(t)|) \leq C\frac{1}{t}\overline{P}_2(\sqrt{|\log t|}, \log|\log t|)
\end{equation}
with some rational function $\overline{P}_2$.

Returning to \eqref{eq:hder2} we have shown that \eqref{eq:hass} holds for $n = 2$. It is not difficult to observe that higher derivatives of \eqref{eq:hder2} will produce accordingly higher derivatives of $f_0$ and that \eqref{eq:hass} holds indeed also for any $n > 2$. Lemma \ref{l:last} is proved.
\end{proof}


\section*{Acknowledgment}

O. Kreml, V. M\'acha and S. Schwarzacher were supported by the GA\v CR (Czech Science Foundation) project GJ17-01694Y in the general framework of RVO: 67985840.
E. Chiodaroli was supported by the Italian National Grant FFABR 2017.


\begin{thebibliography}{99}

\bibitem{Bu} Buckmaster, T.: {\em Onsager's conjecture almost everywhere in time}. Communications in Mathematical Physics, 333(3) (2015) 1175--1198.
\bibitem{BDIS} Buckmaster, T., De Lellis, C., Isett, P., Sz\'ekelyhidi, L. Jr.: {\em Anomalous dissipation for 1/5-holder Euler flows}. Annals of Mathematics, 182(1) (2015) 127--172.
\bibitem{BDS} Buckmaster, T., De Lellis, C.,  Sz\'ekelyhidi, L. Jr.: {\em Dissipative Euler flows with Onsager-critical spatial regularity}. Comm. Pure Appl. Math., 69(9) (2016) 1613--1670.
\bibitem{BDSV} Buckmaster, T., De Lellis, C., Sz\'ekelyhidi, L. Jr.,  Vicol, V.: {\em Onsager's conjecture
for admissible weak solutions}. Comm. Pure Appl. Math., 72(2) (2019) 229--274.
\bibitem{Ch} Chiodaroli, E.: {\em A counterexample to well--posedness of entropy solutions to the compressible Euler system}, J. Hyperbolic Differ. Equ. 11(3) (2014) 493--519.
\bibitem{ChiDelKre15} Chiodaroli, E., De Lellis, C., Kreml, O.: {\em Global ill-posedness of the isentropic system of gas dynamics}, Comm. Pure Appl. Math. 68(7) (2015) 1157--1190.
\bibitem{ChiGo17} Chiodaroli, E., Gosse, L.,: 
{\em A numerical glimpse at some non-standard solutions to
compressible Euler equations}, Innovative algorithms and analysis, Springer INdAM Ser., 16, (2017) 111--140.
\bibitem{ChiKre14} Chiodaroli, E., Kreml, O.: {\em On the energy dissipation rate of solutions to the compressible isentropic Euler system}, Arch. Rational Mech. Anal. 214(3) (2014) 1019--1049.
\bibitem{CET} Constantin, P., E, W., Titi, E. S.: {\em Onsager's conjecture on the energy conservation for solutions of Euler's equation}, Comm. Math. Phys., 165(1) (1994) 207--209.
\bibitem{Dafermos16} Dafermos, C. M.: {\em Hyperbolic conservation laws in continuum physics}, vol. 325 of Grundleheren der Mathematischen Wissenschaften [Fundamental Principles of Mathematical Sciences]. Third edition. Springer-Verlag, Berlin (2010).
\bibitem{DS} Daneri, S., Sz\'ekelyhidi, L. Jr.: {\em Non-uniqueness and h-principle for H\"older-continuous weak solutions of the Euler equations}, Archive for Rational Mechanics
and Analysis, 224(2) (2017) 471--514.
\bibitem{DLSz1} De Lellis, C., Sz\'ekelyhidi, L.: {\em The Euler equations as a differential inclusion}, Ann. of Math. (2) 170(3) (2009) 1417--1436.
\bibitem{DLSz2} De Lellis, C., Sz\'ekelyhidi, L.: {\em On admissibility criteria for weak solutions of the Euler equations}, Arch. Rational Mech. Anal. 195(1) (2010) 225--260.
\bibitem{DLSz3} De Lellis, C., Sz\'ekelyhidi, L. Jr.: {\em Dissipative continuous Euler flows}. Invent. Math., 193(2) (2013) 377--407.
\bibitem{Eyink} Eyink, G. L.: {\em Energy dissipation without viscosity in ideal hydrodynamics I. Fourier analysis and local energy transfer}, Phys. D, 78(3--4)  (1994) 222--240.
\bibitem{Fe} Feireisl, E.: {\em Maximal dissipation and well-posedness for the compressible Euler system}, J. Math. Fluid Mech. 16(3) (2014) 447--461.
\bibitem{FGGW} Feireisl, E., Gwaizda, P., \' Swierczewska-Gwiazda, A., Wiedemann, E.: {\em Regularity and energy conservation for the compressible Euler equations.}, Arch. Ration. Mech. Anal. 223(3) (2017) 1375--1395.
\bibitem{Is} Isett, P.: {\em A proof of Onsager's conjecture}. arXiv:1608.08301. To appear in Ann. of Math., 2016.
\bibitem{Is2} Isett, P.: {\em Nonuniqueness and existence of continuous, globally dissipative Euler flows}. ArXiv e-prints, October 2017.
\bibitem{sz} Sz\'ekelyhidi, L.: {\em Weak solutions to the incompressible Euler equations with vortex sheet initial data}, C. R. Math. Acad. Sci. Paris 349(19-20) (2011) 1063--1066.

\end{thebibliography}
\end{document}